\documentclass[twoside,11pt]{article}

%

\usepackage{jmlr2e}
\usepackage{algorithm}
\usepackage{algorithmic}

\usepackage{cleveref}

\usepackage{multirow}
\usepackage{multicol}

\usepackage{color}
\usepackage{booktabs}
\usepackage{tabu}

\newtheorem{assumption}{Assumption}





\ShortHeadings{SVRG for SDP}{Zeng, Zha, Ma, and Yao}
\firstpageno{1}

\begin{document}

\title{On Stochastic Variance Reduced Gradient Method for \\Semidefinite Optimization}

\author{\name Jinshan Zeng \email jinshanzeng@jxnu.edu.cn \\
       \addr School of Computer and Information Engineering and Institute of Artificial Intelligence, Jiangxi Normal University, Nanchang, China
       \AND
       \name Yixuan Zha \email zyx@jxnu.edu.cn \\
       \addr School of Computer and Information Engineering, Jiangxi Normal University, Nanchang, China
       \AND
       \name Ke Ma\thanks{Corresponding author} \email  make@iie.ac.cn \\
       \addr Institute of Computing, Chinese Academy of Sciences, and University of Chinese Academy of Sciences, Beijing, China
       \AND
       \name Yuan Yao$^*$ \email  yuany@ust.hk \\
       \addr Department of Mathematics, Chemical and Biomolecular Engineering, Hong Kong University of Science and Technology, Hong Kong.
       }

\editor{xxx}

\maketitle

\begin{abstract}
The low-rank stochastic semidefinite optimization has attracted rising attention due to its wide range of applications. An effective way to solve this problem is based on the low-rank factorization, which significantly improves the computational efficiency but brings some great challenge to the analysis due to the recast nonconvex reformulation. Among existing methods based on the nonconvex reformulation, the stochastic variance reduced gradient (SVRG) method has been regarded as one of the most effective methods. SVRG in general consists of two loops, where a reference full gradient is firstly evaluated in the outer loop and then used to yield a variance reduced estimate of the current gradient in the inner loop. Two options have been suggested to yield the output of the inner loop, where 
Option I
sets the output as its last iterate, 
and Option II
yields the output via selecting randomly from all the iterates in the inner loop. However, there is a significant gap between the theory and practice of SVRG when adapted to the stochastic semidefinite programming (SDP). SVRG practically works better with Option I, while most of existing theoretical results focus on Option II. In this paper, we fill this gap via exploiting a new semi-stochastic variant of the inner loop in the original SVRG, adapted to the semidefinite optimization. Equipped with this, we establish the global linear submanifold convergence (i.e., converging exponentially fast to a submanifold of a global minimum under the orthogonal group action) of the proposed SVRG method with Option I, given a provable initialization scheme and under certain smoothness and restricted strongly convex assumptions.
Our analysis includes the effects of the mini-batch size and update frequency in the inner loop as well as two practical step size strategies, the fixed step size and stabilized Barzilai-Borwein step sizes. Some numerical results in matrix sensing demonstrate that the proposed SVRG method is similar to the original SVRG with Option I, and outperforms their counterparts with Option II as well as stochastic gradient descent and factored gradient descent methods. The effects of algorithmic parameters are also studied numerically. Not only this paper establishes the convergence of SVRG with Option I for semi-definite optimization, but also the new technical development in the convergence analysis is of value to general low-rank matrix learning problems. 
\end{abstract}

\begin{keywords}
SVRG, semidefinite programming, variance reduction, low-rank factorization, linear convergence
\end{keywords}

\section{Introduction}

There are many applications in scientific research and engineering involving the following low-rank stochastic convex semidefinite optimization problem:
\begin{equation}
\label{Eq:SD-stoch}
\mathop{\mathrm{min}}_{X \in \mathbb{S}^{p}} \ f(X) = \frac{1}{n} \sum_{i=1}^n f_i(X)  \quad \mathrm{s.t.} \quad X \succeq 0,
\end{equation}
where $f_i(X)$ is some convex, smooth cost function associated with the $i$-th sample, $X \succeq 0$ is the positive semidefinite (PSD) constraint, and $\mathbb{S}^{p}$ represents the set of real symmetric matrices of size $p\times p$. In many related applications, certain low-rank assumption is usually imposed on $X$.
Typical applications include the non-metric multidimensional scaling \citep{Agarwal2007,Borg-NMDS2005}, matrix sensing \citep{Sanghavi2013,Recht-2016}, phase retrieval \citep{Candes-ACHA2015}, synchronization \citep{Bandeira-Synchronization2016}, and community detection \citep{Montanari-CommunityDetection2016}.

The classical algorithms for solving problem \eqref{Eq:SD-stoch} mainly include the first-order methods such as the well-known projected gradient descent method \citep{Nestrov-1989}, interior point method \citep{Alizadeh-IPM1995}, and more specialized path-following interior point methods
(for more detail, see the nice survey paper \citep{Monteiro-SDP2003} and references therein).
However, most of these methods are not well-scalable due to the PSD constraint, i.e., $X \succeq 0$.
To break the hurdle of PSD constraint,
the idea of low-rank factorization was adopted in the literature \citep{Monteiro2003,Monteiro2005} and became very popular in the past five years due to its empirical success \citep{Sanghavi-FGD2016,Jin2016,Ma2018,Ma2019,Wang2017-ICML,Zeng-svrg2018,Zeng-SGD2019}.

Mathematically, let $X=UU^T$ for some rectangular matrix $U\in \mathbb{R}^{p\times r}$, $X^*$ be a global optimum of problem \eqref{Eq:SD-stoch} with rank $r$ (where $r<p$), and $g(U):= f(UU^T)$, then problem \eqref{Eq:SD-stoch} can be recast as
\begin{equation}
\label{Eq:SD-determ-ncvx}
\mathop{\mathrm{min}}_{U \in \mathbb{R}^{p\times r}} \ g(U):= f(UU^T).
\end{equation}
Since the PSD constraint has been eliminated, the recast problem (\ref{Eq:SD-determ-ncvx}) has a significant advantage over (\ref{Eq:SD-stoch}), but this benefit has a corresponding cost: the objective function is no longer convex but instead \textit{nonconvex} in general. This brings a great challenge to the analysis. Even for the simple first-order methods like the factored gradient descent (FGD) and gradient descent (GD), its \emph{local linear convergence}
remains unspecified until the recent work in \citep{Sanghavi-FGD2016,Wang2017-GD}, respectively.
Moreover, facing the challenge in large scale applications with a big $n$, stochastic algorithms \citep{Robbins-SGD1951} have been widely adopted nowadays, that is, at each iteration, we only use the gradient information of one or a small batch of the whole sample instead of the full gradient over $n$ samples. However, due to the existence of variance of such stochastic gradients, the stochastic gradient descent (SGD) method either converges to an $O(\eta)$ neighborhood of a global optimum at a linear rate when adopting a fixed step size $\eta$, or has a \textit{local sublinear} convergence rate when adopting a diminishing step size, in the restricted strongly convex (RSC) case \citep{Zeng-SGD2019}.

To accelerate SGD,
various variance reduction techniques have been proposed in the literature, e.g. the stochastic variance reduced gradient (SVRG) method in \citep{Johnson-Zhang-svrg2013} and stochastic average gradient (SAG) method in \citep{Bach-SAG}, which resume the linear convergence for strongly convex problems in Euclidean space. 
One distinction of the SVRG proposed by \cite{Johnson-Zhang-svrg2013} lies in that it does not need to memorize the gradients over training sample set; instead it involves two loops of iterations for updating full-sample gradients and stochastic variations. Its algorithmic implementations give rise to two choices: Option I, where one naturally sets the output of the inner loop as its last iterate; Option II, where one selects the output of the inner loop from all the iterates in the inner loop in a uniformly random way, more expensive in memory cost on recording inner loop iterates. Although the paper \citep{Johnson-Zhang-svrg2013} established Option II's linear convergence guarantee, it leaves open the convergence of Option I, in spite of a more natural and efficient scheme in practice. This gap was later filled by \citep{Tan-Ma-bbsvrg2016} with an introduction of Barzilai-Borwein (BB) step size \citep{BB-stepsize1988}.


When adapted to the the nonconvex matrix problem, linear convergence results are only known for Option II. \cite{Zhang2017-SVRG} studied the SVRG with Option II 
for the matrix sensing problem with the square loss and established its linear \emph{submanifold} convergence, (i.e., converges exponentially fast to a submanifold as the orbit of a global optimum under the orthogonal group action) under certain restricted isometry property (RIP). This work was generalized by 
\cite{Wang2017-ICML} with a variant of SVRG with Option II called \textit{Stochastic Variance-Reduced Gradient for Low-Rank Matrix Recovery} (dubbed \textit{SVRG-LR} for short) for the low-rank matrix recovery problem with general loss and the Restricted Strongly Convex (RSC) condition. 

Yet, for Option I scheme in nonconvex low-rank matrix recovery, convergence theories remain largely open. In fact, Option I is more natural and usually enjoys a faster empirical loss reduction than Option II used in SVRG (see \cite{Johnson-Zhang-svrg2013,Tan-Ma-bbsvrg2016,Sebbouh-Bach-svrg-opt1-19} for example in Euclidean space and also Figure \ref{Fig:comp-option} when adapted to the matrix sensing case considered in our later numerical experiments in Section \ref{sc:experiment}). Moreover, the memory storage required in the implementation of SVRG with Option I (called \textit{SVRG-I} henceforth) is generally much less than that of SVRG with Option II (called \textit{SVRG-II} henceforth), since it only needs to store one iterate of the inner loop when implementing SVRG-I, instead of all iterates of the inner loop required in the implementation of SVRG-II. Therefore, it is desired to establish the convergence of Option I and its variants due to these advantages.   

Particularly when the matrix variable $X$ is positive semi-definite, several recent studies provided partial answers to the convergence problem of Option I. 
First, \cite{Ma2018,Ma2019} adapted the original SVRG with Option I 
to the application of ordinal embedding \citep{Agarwal2007} based on the low-rank factorization reformulation.
In these works, a generalization of the well-known Barzilai-Borwein (BB) step size \citep{BB-stepsize1988} called the \textit{stabilized Barzilai-Borwein (SBB)} step size was also introduced for SVRG to alleviate the challenge of tuning the step size parameter, where the BB step size is a special case of SBB step size.
Under certain smoothness assumption, the convergence to a critical point of the SVRG with Option I and SBB step size was firstly established in \cite{Ma2018,Ma2019}; then its local linear point convergence (i.e., exponentially fast convergence to a global minimum in Euclidean distance starting from a neighborhood of this global minimum) was further established in \cite{Zeng-svrg2018} under the RSC condition. However, the submanifold convergence of SVRG with Option I is still left open, an important problem as a global optimum is invariant under the orthogonal group action. 



\begin{figure}[!t]
\begin{minipage}[b]{0.99\linewidth}
\centering
\includegraphics*[scale=.6]{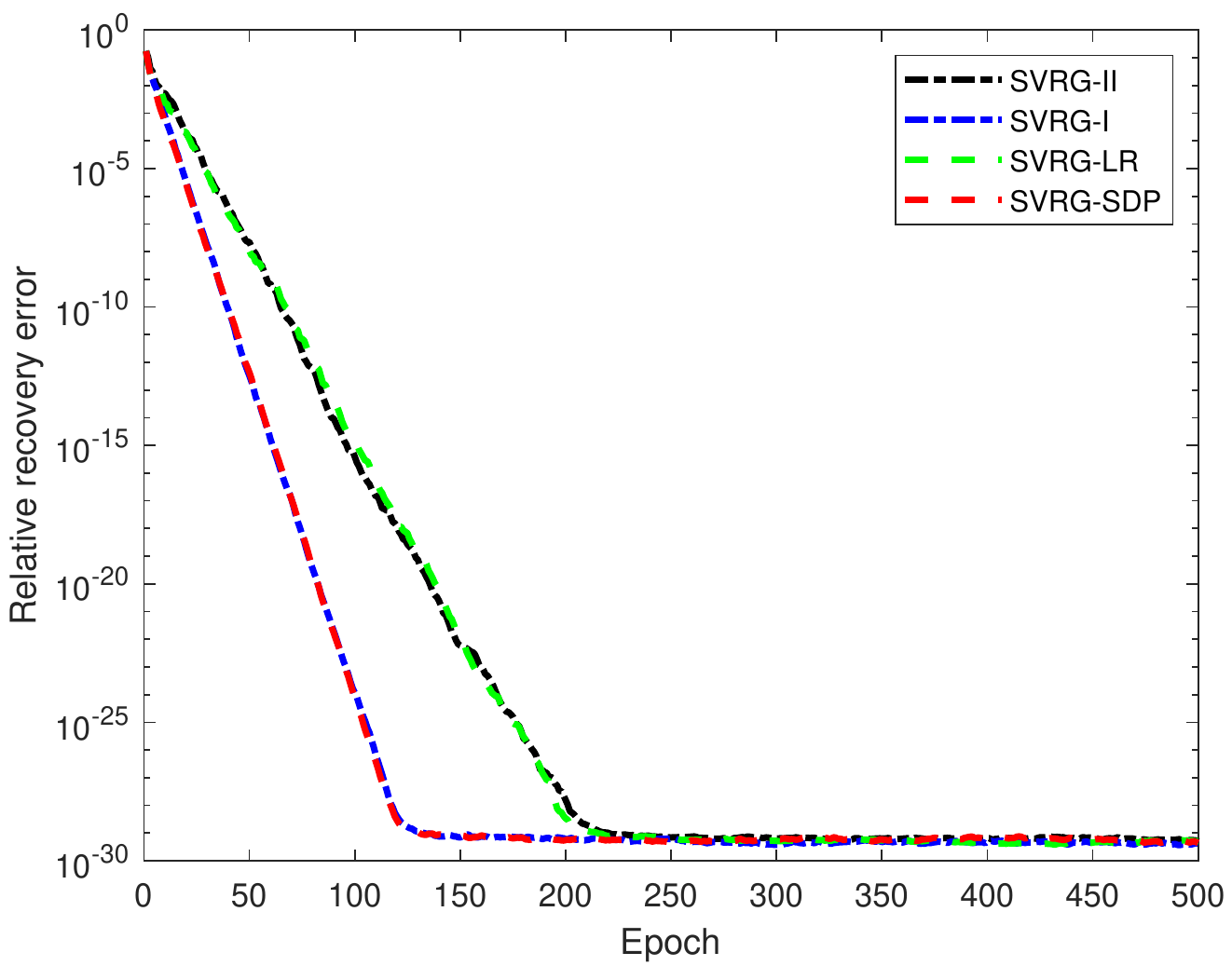}
\end{minipage}
\hfill
\caption{Comparison on the performance of SVRG methods with two options for the low-rank matrix sensing problem, where SVRG-I and SVRG-II are respectively the original SVRG with Option I and II, and SVRG-LR and SVRG-SDP are respectively the modified SVRG using a semi-stochastic gradient with Option II and I. The specific settings can be found in Section \ref{sc:experiment}. It can be observed that SVRG methods equipped with Option I is superior to their Option II counterparts.
}
\label{Fig:comp-option}
\end{figure}

In this paper, we fill this gap by providing a \emph{global linear submanifold} convergence theory of Stochastic Variance-Reduced Gradient method for Semi-definite optimization \eqref{Eq:SD-determ-ncvx}, using a variant of Option I, under general loss and RSC with the fixed step size and general stabilized Barzilai-Borwein step size. To achieve this, motivated by \cite{Wang2017-universal-SVRG}, we adopted the following new \emph{semi-stochastic gradient} (i.e., $\nabla f_{i_t}(X^t)U^t - (\nabla f_{i_t}(\tilde{X}^k) - \nabla f(\tilde{X}^k))U^t$) in place of the original one in the inner loop of SVRG (i.e., $\nabla f_{i_t}(X^t)U^t - (\nabla f_{i_t}(\tilde{X}^k)-\nabla f(\tilde{X}^k))\tilde{U}^k$, called SVRG-I in this paper), see Algorithm \ref{alg:SVRG} for detail. Thus, our algorithm is called \textit{Stochastic Variance-Reduced Gradient for SDP} (dubbed \textit{SVRG-SDP} for short). 
Distinguished to SVRG-LR, we adopt Option I instead of Option II by selecting the output of inner loop as the last iterate ($\tilde{U}^{k+1} = U^m$), as well as the mini-batch strategy. Under the regular Lipschitz gradient and restricted strongly convex assumptions, we at first establish a local linear submanifold convergence of the proposed SVRG method (see Theorem \ref{Thm:svrg}), where the radius of the initial ball permitted to guarantee the linear submanifold convergence is larger than existing results established in the literature \citep{Sanghavi-FGD2016,Zhang2017-SVRG,Wang2017-universal-SVRG,Zeng-svrg2018,Zeng-SGD2019}, as listed in Table \ref{Table:comparison-theory}.
Then, we boost such local linear submanifold convergence to the global linear submanifold convergence\footnote{that is, exponentially fast convergence to a submanifold of the orthogonal group action orbit of a global minimum starting from an initial point that is not necessarily close to the global minimum submanifold.} (see Theorem \ref{Thm:svrg-globalconv}) with an appropriate initial scheme based on the projected gradient descent method (see Algorithm \ref{alg:initialization}).
Moreover, we establish the same global linear submanifold convergence results for the proposed SVRG methods equipped with two practical step size schemes, i.e., the fixed step size and certain stabilized Barzilai-Borwein step size introduced in \cite{Ma2018,Ma2019}.
A series of experiments in matrix sensing are conducted to show the effectiveness of the proposed method. The numerical results show that the proposed method performs similarly to original SVRG with Option I yet with provable convergence guarantee, and works remarkably better than their counterparts with Option II, i.e., SVRG-LR and SVRG-II, as well as the factored gradient descent and stochastic gradient descent methods. The effects of the mini-batch size, update frequency of the inner loop, and step size schemes are also discussed and studied in both theory and experiment.

In contrast to the existing analysis for SVRG with Option II, the main difficulty in dealing with Option I is to yield the contraction between two successive iterates in the outer loop, since the iterates in the inner loop may change largely during the update of Option I. Such possibly dramatic changes among iterates in the inner loop can be alleviated in Option II where the output of the inner loop is a uniformly random sample of iterates, by taking expectation or average over the iterates of the inner loop in convergence analysis. Hence it is relatively easy to tackle the case of Option II in analysis. However when we switch to Option I  where the output of the inner loop is set as the last iterate, one has to carefully develop some new technique to handle the change in the inner loop to avoid being out of control.


Our key development in the convergence analysis is a novel \emph{second-order descent lemma} (see Lemma \ref{Lemm:2nd-order-descent}) for SVRG-SDP about the progress made by a single iterate of the inner loop, characterized by the submanifold metric (i.e., ${\cal E}(U,U_r^*) := \mathop{\min}_{R \in {\cal O}} \|U-U_r^*R\|_F^2$, where $U_r^*$ is a global minimum with rank $r$, ${\cal O}$ is the set of orthogonal matrices of size $r\times r$). This improves the second-order descent lemma in \cite[Lemma 1]{Zeng-svrg2018} with Euclidean metric (i.e., $\tilde{\cal E}(U,U_r^*) := \|U-U_r^*\|_F^2$) and leads to the desired linear submanifold convergence of SVRG-SDP.

\begin{table}
\tabulinesep=1mm
\centering
\caption{\small{Comparisons on linear convergence guarantees between this paper and existing literature,
where $\kappa$ is the ``condition number'' of the objective function $f$ (to be specified in \eqref{Eq:kappa}), $\sigma_r(X^*)$ and $\tau(X^*)$ are respectively the $r$-th largest singular value and the condition number of the optimum $X^*$, and $r=\mathrm{rank}(X^*)$.
Particularly, $\kappa =1$ in the square loss case as considered in \cite{Zhang2017-SVRG}.
The larger bound on ${\cal E}(\tilde{U}^0,U_r^*)$ implies a larger initial ball permitted to guarantee the linear convergence.
$\dag$
The linear convergence of SVRG-I established in \cite{Zeng-svrg2018} is in the sense of point convergence.
From this table, the initial ball permitted to guarantee the linear submanifold convergence of SVRG-SDP established in this paper is slightly larger than existing ones.
}
}
\hspace{.1mm}
\begin{tabu}{lccc}\toprule
Algorithm  &loss & assumption  & ${\cal E}(\tilde{U}^0,U_r^*)$ \\ \hline
FGD (\cite{Sanghavi-FGD2016}) & general & RSC & $\frac{\sigma_r(X^*)}{10000\kappa^2 \tau^2(X^*)}$\\ \hline
SGD (\cite{Zeng-SGD2019}) & general & RSC & $\frac{(\sqrt{2}-1)\sigma_{r}(X^*)}{2\kappa}$ \\ \hline
SVRG-II (\cite{Zhang2017-SVRG}) & square & RIP  &$\frac{\sigma_r(X^*)}{16}$ \\ \hline
SVRG-I (\cite{Zeng-svrg2018})$^\dag$ & general & RSC  &$\frac{(2+\sqrt{3})\cdot(\sqrt{2}-1)\sigma_{r}(X^*)}{6\kappa}$ \\ \hline
SVRG-LR (\cite{Wang2017-universal-SVRG}) & general & RSC & $\frac{2\sigma_{r}(X^*)}{15\kappa}$  \\ \hline
{\bf SVRG-SDP (this paper)} & general & RSC  & $\frac{8(\sqrt{2}-1)\sigma_r(X^*)}{9\kappa}$ \\\bottomrule
\end{tabu}
\label{Table:comparison-theory}
\end{table}


The rest of this paper is organized as follows. In Section \ref{sc:algorithms}, we introduce the proposed SVRG method, together with an initialization scheme and some different step size strategies. In Section \ref{sc:main result}, we establish the linear submanifold convergence of the proposed method. In Section \ref{sc:proof}, we present a key lemma as well as the proof for our main theorem. A series of experiments are provided in Section \ref{sc:experiment} to demonstrate the effectiveness of the proposed method as well as the effects of algorithmic parameters. We conclude this paper in Section \ref{sc:conclusion}. Part of proofs are presented in Appendix.

For any two matrices $X, Y \in \mathbb{R}^{p\times p}$, their inner product is defined as $\langle X, Y \rangle = \mathrm{tr}(X^TY)$. 
For any matrix $X \in \mathbb{R}^{p\times p}$, $\|X\|_F$ and $\|X\|_2$ denote its Frobenius and spectral norms, respectively, and $\sigma_{\min}(X)$ and $\sigma_{\max}(X)$ denote the smallest and largest \textit{strictly positive} singular values of $X$,  denote $\tau(X):= \frac{\sigma_{\max}(X)}{\sigma_{\min}(X)}$, with a slight abuse of notation, we also use $\sigma_1(X)\equiv \sigma_{\max}(X) \equiv \|X\|_2$.
${\bf I}_p$ denotes the identity matrix with the size $p\times p$. We will omit the subscript $p$ of ${\bf I}_p$ if there is no confusion in the context.

\section{A Stochastic Variance Reduced Gradient Scheme for SDP}
\label{sc:algorithms}

The SVRG method was firstly proposed by \cite{Johnson-Zhang-svrg2013} for minimizing a finite sum of convex functions with a vector argument.
The main idea of SVRG is adopting the variance reduction technique to accelerate SGD and achieves a faster convergence rate.
SVRG is an inner-outer loop based method.
The main purpose of inner loop is to reduce the variance introduced by the stochastic sampling, and thus accelerate the convergence of the outer loop iterates. In this section, we propose a new version of SVRG with Option I adapted to Semidefinie Programming which enjoys provable convergence guarantees. 

\subsection{SVRG-SDP with Option I: Algorithm \ref{alg:SVRG}}
Inspired by \cite{Ma2018,Ma2019} and \cite{Wang2017-ICML}, 
we propose a new variant of SVRG method with Option I to solve the stochastic SDP problem \eqref{Eq:SD-stoch} based on its nonconvex reformulation \eqref{Eq:SD-determ-ncvx}, as described in Algorithm \ref{alg:SVRG}.
Compared to SVRG-I suggested in \citep{Ma2018,Ma2019} for problem \eqref{Eq:SD-determ-ncvx}, a new semi-stochastic gradient 
\begin{equation}
    \nabla f_{i_t}(X^t)U^t - (\nabla f_{i_t}(\tilde{X}^k) - \nabla f(\tilde{X}^k))U^t
\end{equation}
is exploited in the inner loop to replace the original $\nabla f_{i_t}(X^t)U^t - (\nabla f_{i_t}(\tilde{X}^k)-\nabla f(\tilde{X}^k))\tilde{U}^k$ for the variance reduced estimate of the current gradient, where $\tilde{X}^k$ is the estimate at the $k$-th outer loop, $\tilde{U}^k$ is the associated factorization of $\tilde{X}^k$ (i.e., $\tilde{X}^k = \tilde{U}^k (\tilde{U}^k)^T$), and $U^t$ is the $t$-th iterate in the inner loop.
Intuitively, due to the use of the latest iterate $U^t$ in the inner loop, the new semi-stochastic gradient should be more accurate than the old one that mixed $U^t$ and $\tilde{U}^k$, resulting in a possible better performance than SVRG-I as demonstrated by our later numerical experiments. More importantly, the use of such a new semi-stochastic gradient is essential for deriving the linear convergence in the submanifold metric ${\cal E}(U,U_r^*) := \mathop{\min}_{R \in {\cal O}} \|U-U_r^*R\|_F^2$, as shown in the following convergence analysis.
Besides the Option I such that $\tilde{U}^{k+1} = U^m$, we also adopt the mini-batch strategy to SVRG-SDP, which (though obtainable) is missing in \citep{Wang2017-universal-SVRG,Zeng-svrg2018}.

When $m=1$ or $b=n$, SVRG-SDP reduces to the known factored gradient descent (FGD) method studied in \cite{Sanghavi-FGD2016}. This shows that our proposed algorithmic framework provides more flexible choices for the users.

\begin{algorithm}[t]
{\small
\begin{algorithmic}\caption{Stochastic Variance-Reduced Gradient for SDP problem (\textit{SVRG-SDP}) }\label{alg:SVRG}
\STATE {\bf Parameters}: an update frequency $m\geq 1$ of the inner loop, a mini-batch size $b\geq 1$, a sequence of positive step sizes $\{\eta_k\}$, an initial point $\tilde{U}^0 \in \mathbb{R}^{p \times r}$.
\smallskip
\FOR{$k=0,1,\ldots$}
\STATE $\tilde{X}^k := \tilde{U}^k {(\tilde{U}^k)}^T$
\STATE $U^0 = \tilde{U}^k$
\smallskip
\FOR{$t=0,\ldots,m-1$} 
\STATE $X^t = U^t {U^t}^T$
\STATE Pick ${\cal I}_t \subset \{1,\ldots,n\}$ with $|{\cal I}_t| = b$ via a uniformly random way
\STATE $U^{t+1} = U^t - \eta_k [\frac{1}{b}\sum_{i_t\in {\cal I}_t}(\nabla f_{i_t}(X^t) - \nabla f_{i_t}(\tilde{X}^k)) + \nabla f(\tilde{X}^k)]U^t$
\ENDFOR
\smallskip
\STATE $\tilde{U}^{k+1} = U^m$
\ENDFOR
\end{algorithmic}}
\end{algorithm}

\subsection{Initialization Procedure: Algorithm \ref{alg:initialization}}
Note that the recast problem \eqref{Eq:SD-determ-ncvx} is no longer convex, thus the choice of initialization is very important in the implementations of these algorithms for the low-rank matrix estimation as demonstrated in \citep{Sanghavi-FGD2016,Wang2017-universal-SVRG,Wang2017-GD,Zhang2017-SVRG,Ma2018,Ma2019,Zeng-svrg2018,Zeng-SGD2019}.
One of commonly used strategies is to construct the initialization directly from the observed data like in the applications of matrix sensing, matrix completion and phase retrieval
(say, \cite{Candes-TIT2015,Sanghavi2013,Netrapalli-2013,Zheng-Lafferty2015}).
Such a strategy is generally effective for the case that the objective function has a small ``condition number'', 
while for the general objective functions as considered in this paper, another common strategy is to use one of the standard convex algorithms (say, projected gradient descent (ProjGD) \citep{Nestrov-1989}).
Some specific implementations of this idea have been used in \citep{Recht-2016,Sanghavi-FGD2016,Wang2017-universal-SVRG,Wang2017-GD,Zeng-SGD2019}.

Motivated by the existing literature, this paper also suggests using the ProjGD method to generate the initialization $\tilde{U}^0$ for the general nonlinear loss function as described in Algorithm \ref{alg:initialization}, where $\mathrm{Proj}_{\mathbb{S}_+^{p}}$ represents the projection onto $\mathbb{S}_+^p$, the set of symmetric positive semidefinite matrices of size $p\times p$.

\begin{algorithm}[t]
{\small
\begin{algorithmic}\caption{Initialization for \textit{SVRG-SDP}}\label{alg:initialization}
\STATE Let $X^0 = 0$, $T$ be the number of iterations and $L$ be a Lipschit constant of $\nabla f$.
\smallskip
\FOR{$t=1,2,\ldots, T$}
\STATE ${X}^t = \mathrm{Proj}_{\mathbb{S}_+^{p}}({X}^{t-1}- \frac{1}{L}\nabla f({X}^{t-1}))$
\ENDFOR
\STATE Let $\tilde{X}^0 = X^T$, and $\tilde{U}^0$ be a rank-$r$ factorization of $\tilde{X}^0$.
\end{algorithmic}}
\end{algorithm}

\subsection{Fixed and Stabilized Barzilai-Borwein Step Sizes} 
Besides the issue of the initial choice,
another important implementation issue of SVRG is the tuning of the step size.
There are mainly two classes of step sizes: deterministic or data adaptive. Here we discuss three particular choices.
\begin{enumerate}
\item[(a)]
Fixed step size \citep{Johnson-Zhang-svrg2013}:
\begin{equation}
\label{Eq:fixed-stepsize}
\eta_k \equiv \eta, \quad \text{for some}\ \eta>0.
\end{equation}

\item[(b)]
Barzilai-Borwein (BB) step size \citep{BB-stepsize1988,Tan-Ma-bbsvrg2016}: given an initial $\eta_0 >0$ and for $k\geq 1$, let $\tilde{g}_k := \nabla f(\tilde{X}^k)$,
\begin{equation}
\label{Eq:bb-stepsize}
\eta_k = \frac{1}{m} \cdot \frac{\|\tilde{X}^k - \tilde{X}^{k-1}\|_F^2}{|\langle \tilde{X}^k - \tilde{X}^{k-1}, \tilde{g}_k - \tilde{g}_{k-1}\rangle|}.
\end{equation}
Note that such a BB step size is originally studied for strongly convex objective functions \citep{Tan-Ma-bbsvrg2016}, and it may be breakout if there is no guarantee of the curvature of $f$ like in nonconvex cases \citep{Ma2018}. In order to avoid such possible instability of \eqref{Eq:bb-stepsize} in our studies, a variant of BB step size, called the \textit{stabilized BB} step size, is suggested by \cite{Ma2018,Ma2019} shown as follows.

\item[(c)]
Stabilized BB (SBB) step size \citep{Ma2018,Ma2019}: given an initial $\eta_0 >0$ and an $\epsilon \geq 0$, for $k \geq 1$,
\begin{equation}
\label{Eq:sbb-stepsize}
\eta_k = \frac{1}{m} \cdot
\frac{\|\tilde{X}^k - \tilde{X}^{k-1}\|_F^2}{|\langle \tilde{X}^k - \tilde{X}^{k-1}, \tilde{g}_k - \tilde{g}_{k-1}\rangle| + \epsilon \|\tilde{X}^k - \tilde{X}^{k-1}\|_F^2}.
\end{equation}
Note that the BB step size is a special case of SBB step size with $\epsilon=0$, thus we call the BB step size as \text{SBB}$_0$.
\end{enumerate}

Besides the above three step sizes, there are some other schemes like the diminishing step size and the use of smoothing technique in BB step size as discussed in \cite{Tan-Ma-bbsvrg2016}.
However, we mainly focus on the listed two step sizes (as BB is a special case SBB$_0$) in this paper due to their established effectiveness in wide applications.

\section{Convergence Theory of SVRG-SDP}
\label{sc:main result}

In this section, we present the linear submanifold convergence of the proposed SVRG-SDP.

\subsection{Assumptions and Convergence Metric}

To present our main convergence results, we need the following assumptions.

\begin{assumption}
\label{Assump:objfun}
Suppose that
\begin{enumerate}
\item[(a)]
each $f_i$ ($i=1,\ldots,n$) is $L$-Lipschitz differentiable for some constant $L>0$, i.e., $f_i$ is smooth and $\nabla f_i$ is Lipschitz continuous satisfying
\[
\|\nabla f_i(X) - \nabla f_i(Y)\|_F \leq L \|X-Y\|_F, \ \forall X, Y \in \mathbb{S}_+^p.
\]

\item[(b)]
$f$ is $(\mu, r)$-restricted strongly convex (RSC) for some constants $\mu>0$ and $r \leq p$, i.e., for any $X, Y \in \mathbb{S}_+^p$ with rank $r$,
\[
f(Y) \geq f(X) + \langle \nabla f(X), Y-X\rangle + \frac{\mu}{2} \|Y-X\|_F^2.
\]
\end{enumerate}
\end{assumption}

The above assumptions are regular and commonly used in the literature (see, \cite{Sanghavi-FGD2016,Wang2017-universal-SVRG,Wang2017-ICML,Zeng-SGD2019}).
Assumption \ref{Assump:objfun}(a) implies that $f$ is also $L$-Lipschitz differentiable.
For any $L$-Lipschitz differentiable and $(\mu, r)$-restricted strongly convex function $h$, the following hold (\cite{Nestrov-2004}):
\begin{align*}
& h(Y) \leq h(X) + \langle \nabla h(X), Y-X \rangle + \frac{L}{2}\|Y-X\|_F^2, \\
& \mu \|X-Y\|_F^2 \leq \langle \nabla h(X) - \nabla h(Y), X-Y \rangle \leq L \|X-Y\|_F^2,
\end{align*}
where the first inequality holds for any $X, Y \in \mathbb{S}_+^p$, and the second inequality holds for any $X, Y \in \mathbb{S}_+^p$ with rank $r$, the first inequality and the right-hand side of the second inequality hold for the Lipschitz continuity of $\nabla h$, and the left-hand side of the second inequality is due to the $(\mu, r)$-restricted strong convexity of $h$.
Under Assumption \ref{Assump:objfun}, let
\begin{align}
& \kappa := \frac{L}{\mu}, \quad \gamma_0 := (\sqrt{2}-1)\kappa^{-1}, \label{Eq:kappa}
\end{align}
where $\kappa \geq 1$ is generally called the \textit{condition number} of the objective function.

In order to characterize the submanifold convergence of SVRG-SDP, we use the following orthogonally invariant metric to measure the gap between $U$ and $V$,
\[
{\cal E}(U,V) := \mathop{\min}_{R \in {\cal O}} \|U-VR\|_F^2, \quad \forall\  U, V \in \mathbb{R}^{p\times r},
\]
where ${\cal O}$ is the set of orthogonal matrices of size $r\times r$. Such metric has been widely used in the convergence analysis of low-rank factorization based algorithms in the literature (e.g., \cite{Sanghavi-FGD2016,Recht-2016,Wang2017-universal-SVRG,Wang2017-ICML,Zeng-SGD2019}).
Compared to the Euclidean metric $\tilde{\cal E}(U^t,U_r^*) := \|U^t-U_r^*\|_F^2$ used in the convergence analysis of SVRG-I in \cite{Zeng-svrg2018}, such an orthogonally invariant metric ${\cal E}(U,V)$ is more desired since a global minimum of the low-rank stochastic semidefinite programming problem \eqref{Eq:SD-stoch} is naturally invariant in the loss after an orthogonal transform on its factorizations.

\subsection{Local Linear Submanifold Convergence}

Let $X^*$ be a global optimum of problem \eqref{Eq:SD-stoch} with rank $r = \mathrm{rank}(X^*)$, and $U_r^* \in \mathbb{R}^{p\times r}$ be a rank-$r$ decomposition of $X^*$ (i.e., $X^* = U_r^* (U_r^*)^T$).
Let ${\cal N}_{\gamma_0}:= \{U:{\cal E}(U,U_r^*) \leq \gamma_0 \sigma_r(X^*)\}$, and we define the following constants:
\begin{align}
& B:= \max_{U \in {\cal N}_{\gamma_0}} \|UU^T\|_F, \label{Eq:B}\\
& \eta_{\max} := \frac{\gamma_0 \sigma_r(X^*)}{54(2+\sqrt{\gamma_0})^2L\|X^*\|_2B}.\label{Eq:eta-max}
\end{align}
Let $\{\eta_k\}_{k\in \mathbb{N}}$ be a sequence satisfying $\eta_k \in (0,b\eta_{\max}]$ for some mini-batch size $b$ satisfying
\begin{align}
\label{Eq:cond-b}
1\leq b \leq \min\left\{n,54(\sqrt{2}+1)\kappa \tau(X^*)\right\},
\end{align}
where $\tau(X^*) = \frac{\|X^*\|_2}{\sigma_r(X^*)}$.
Given a positive integer $m$, define
\begin{align}
\rho_k := \frac{1}{2}\left[1+\left(1-\frac{2}{27}\eta_kL\gamma_0\sigma_r(X^*) \right)^m \right]. \label{Eq:rho}
\end{align}
By \eqref{Eq:eta-max} and \eqref{Eq:cond-b}, there holds
\begin{align}
\label{Eq:cond-etak}
\eta_k \leq b\eta_{\max} \leq \frac{1}{4BL},
\end{align}
and thus,
\begin{align}
\label{Eq:cond-etak1}
\frac{2}{27}\eta_kL\gamma_0\sigma_r(X^*) \leq \frac{\sqrt{2}-1}{54 \kappa \tau(X^*)}<1,
\end{align}
where the second inequality holds for the definition of $B$ (implying $B \geq \|X^*\|_2$).
The above inequality shows that $\rho_k \in (0,1)$.
Based on the above defined constants, we present our main theorem on the local linear submanifold convergence of the proposed SVRG-SDP as follows, where its proof is postponed in Section \ref{sc:proof}.

\begin{theorem}[Local linear submanifold convergence]
\label{Thm:svrg}
Let $\{\tilde{U}^k\}$ be a sequence generated by Algorithm \ref{alg:SVRG}. Suppose that Assumption \ref{Assump:objfun} holds with $1<\kappa \leq 64(\sqrt{2}-1)$. Let $m\geq 1$, $b$ satisfy \eqref{Eq:cond-b}, and $\eta_k \in (0,b\eta_{\max}]$ for $k\in \mathbb{N}$. If the initialization $\tilde{U}^0$ satisfies $0<{\cal E}(\tilde{U}^0, U_r^*)< \frac{8(\sqrt{2}-1)\sigma_r(X^*)}{9\kappa}$, then for any positive integer $k$, there holds
\begin{align}
\label{Eq:linear-convergence}
\mathbb{E}[{\cal E}(\tilde{U}^{k},U_r^*)] \leq \left(\prod_{j=0}^{k-1} \rho_j\right) \cdot {\cal E}(\tilde{U}^0,U_r^*).
\end{align}
Particularly, if a fixed step size $\eta \in (0,b\eta_{\max}]$ is used, then the above inequality implies the following linear convergence,
\begin{align}
\label{Eq:local-linear-convergence}
\mathbb{E}[{\cal E}(\tilde{U}^{k},U_r^*)]
\leq \left(\frac{1}{2}\left[1+\left(1-\frac{2\eta L\gamma_0\sigma_r(X^*)}{27} \right)^m \right]\right)^k  {\cal E}(\tilde{U}^0,U_r^*).
\end{align}
\end{theorem}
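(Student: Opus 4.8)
The plan is to prove Theorem~\ref{Thm:svrg} by establishing a one-step (outer-loop) contraction of the form $\mathbb{E}[{\cal E}(\tilde U^{k+1},U_r^*)\mid \tilde U^k]\le \rho_k\,{\cal E}(\tilde U^k,U_r^*)$ whenever $\tilde U^k\in{\cal N}_{\gamma_0}$, and then iterating. The core of the argument must be the promised \emph{second-order descent lemma} (Lemma~\ref{Lemm:2nd-order-descent}), which controls the progress of a single \emph{inner}-loop step in the submanifold metric. So first I would fix the outer index $k$, write $U^0=\tilde U^k$, $X^t=U^t(U^t)^T$, and let $R^t\in{\cal O}$ be an optimal orthogonal alignment so that ${\cal E}(U^t,U_r^*)=\|U^t-U_r^*R^t\|_F^2$. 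The key identity is $U^{t+1}-U_r^*R^t = (U^t-U_r^*R^t) - \eta_k G_t U^t$, where $G_t=\tfrac1b\sum_{i_t\in{\cal I}_t}(\nabla f_{i_t}(X^t)-\nabla f_{i_t}(\tilde X^k))+\nabla f(\tilde X^k)$ is the semi-stochastic gradient \emph{matrix}. Expanding $\|U^{t+1}-U_r^*R^t\|_F^2$ gives a deterministic "descent" inner-product term $-2\eta_k\langle G_t U^t, U^t-U_r^*R^t\rangle$ plus a second-order term $\eta_k^2\|G_t U^t\|_F^2$; taking conditional expectation over ${\cal I}_t$ kills the bias (since $\mathbb{E}[G_t\mid {\cal F}_t]=\nabla f(X^t)$, because the new semi-stochastic gradient multiplies the \emph{same} $U^t$), so the cross term becomes $-2\eta_k\langle \nabla f(X^t)U^t, U^t-U_r^*R^t\rangle$, which is exactly the FGD-type descent direction analyzed in \cite{Sanghavi-FGD2016,Wang2017-universal-SVRG}. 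Then I would invoke Lemma~\ref{Lemm:2nd-order-descent} to turn this into $\mathbb{E}[{\cal E}(U^{t+1},U_r^*)\mid{\cal F}_t]\le \big(1-\tfrac{2}{27}\eta_k L\gamma_0\sigma_r(X^*)\big){\cal E}(U^t,U_r^*)$ plus a variance remainder controlled by ${\cal E}(\tilde U^k,U_r^*)$ through the RSC/Lipschitz bounds and the step-size constraint $\eta_k\le b\eta_{\max}$ and mini-batch constraint \eqref{Eq:cond-b}.

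Second, I would chain the inner-loop contraction over $t=0,\dots,m-1$. Because of Option~I, $\tilde U^{k+1}=U^m$, so I need $\mathbb{E}[{\cal E}(U^m,U_r^*)\mid \tilde U^k]\le c^m\,{\cal E}(U^0,U_r^*)+(\text{accumulated variance terms})$ with $c=1-\tfrac{2}{27}\eta_k L\gamma_0\sigma_r(X^*)$. The accumulated variance terms form a geometric-like sum $\sum_{t=0}^{m-1}c^{\,m-1-t}(\text{something})\cdot{\cal E}(\tilde U^k,U_r^*)$; the crucial point is that in SVRG the variance of $G_t$ is bounded by the \emph{distance of $\tilde X^k$ (not $X^t$) to $X^*$}, hence proportional to ${\cal E}(\tilde U^k,U_r^*)={\cal E}(U^0,U_r^*)$, which is what lets the sum collapse into the clean factor $\rho_k=\tfrac12[1+c^m]$ rather than something that merely shrinks by $c^m$. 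Summing the geometric series and using \eqref{Eq:cond-etak}--\eqref{Eq:cond-etak1} to bound the variance coefficient by $\tfrac12(1-c^m)$ yields $\mathbb{E}[{\cal E}(\tilde U^{k+1},U_r^*)\mid\tilde U^k]\le\rho_k{\cal E}(\tilde U^k,U_r^*)$. A separate bookkeeping step is needed to check the \emph{invariance} of the neighborhood: since $\rho_k<1$ and the initialization lies in a ball of radius $\tfrac{8(\sqrt2-1)}{9\kappa}\sigma_r(X^*)$ which (using $\kappa\le 64(\sqrt2-1)$ and $\gamma_0=(\sqrt2-1)\kappa^{-1}$) is contained in ${\cal N}_{\gamma_0}$, an induction on $k$ keeps every $\tilde U^k\in{\cal N}_{\gamma_0}$, so the one-step bound applies at every stage; taking total expectations and telescoping gives \eqref{Eq:linear-convergence}, and specializing $\eta_k\equiv\eta$ gives \eqref{Eq:local-linear-convergence}.

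I expect the main obstacle to be exactly the step that is flagged in the introduction: handling the drift of the inner iterates $U^t$ away from $U^0=\tilde U^k$ under Option~I. In Option~II one averages over $t$ and can afford crude per-step bounds, but here the last iterate $U^m$ could in principle wander out of ${\cal N}_{\gamma_0}$ during the $m$ inner steps before the contraction "kicks in". Technically this means I cannot assume $U^t\in{\cal N}_{\gamma_0}$ for $t\ge1$ for free; I would need an auxiliary monotonicity argument showing ${\cal E}(U^{t+1},U_r^*)\le{\cal E}(U^t,U_r^*)+(\text{small variance term bounded by }{\cal E}(U^0,U_r^*))$ deterministically enough to keep $U^t$ inside a slightly enlarged ball, then bootstrap. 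This is where the new semi-stochastic gradient (multiplying $U^t$ rather than mixing $U^t$ and $\tilde U^k$) is essential: it makes $\mathbb{E}[G_t\mid{\cal F}_t]=\nabla f(X^t)$ clean so the per-step analysis is a genuine perturbation of the FGD step at $U^t$, and it is what makes the submanifold alignment $R^t$ (rather than a fixed $R$) compatible with the recursion. The secondary obstacle is the alignment bookkeeping: the optimal rotation $R^t$ changes with $t$, so I must use the standard fact that $\langle U^t-U_r^*R^t,\, U_r^*(R^{t+1}-R^t)\rangle$-type cross terms are nonnegative or negligible (first-order optimality of $R^t$) to pass from ${\cal E}(U^{t+1},U_r^*)$ defined with its own optimal rotation to the upper bound computed with $R^t$; this is routine but must be done carefully to avoid losing constants that would violate the stated radius $\tfrac{8(\sqrt2-1)}{9\kappa}\sigma_r(X^*)$.
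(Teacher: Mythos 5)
Your proposal follows essentially the same route as the paper: the per-step second-order descent lemma (Lemma \ref{Lemm:2nd-order-descent}) in the metric ${\cal E}(\cdot,U_r^*)$, absorption of the quadratic term via the radius $\tfrac{8}{9}\gamma_0\sigma_r(X^*)$, solution of the resulting affine recursion over the $m$ inner steps to obtain $\rho_k=\tfrac12\bigl[1+\bigl(1-\tfrac{2}{27}\eta_k L\gamma_0\sigma_r(X^*)\bigr)^m\bigr]$, and an induction over the outer loop; your geometric-series summation is algebraically equivalent to the paper's trick of subtracting $\tfrac12{\cal E}(\tilde U^k,U_r^*)$ from both sides, and your worry about the changing alignment $R^t$ is handled in the paper simply by bounding the minimum over ${\cal O}$ by the value at the previous optimal rotation, with the neighborhood-invariance of the inner iterates treated (as you anticipate) as part of the induction hypothesis.
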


Theorem \ref{Thm:svrg} establishes the local linear submanifold convergence of the proposed SVRG method for problem \eqref{Eq:SD-determ-ncvx} under the smoothness and RSC assumptions.
According to Theorem \ref{Thm:svrg}, the radius of the initial ball permitted to guarantee the linear convergence is $\frac{8(\sqrt{2}-1)\sigma_r(X^*)}{9\kappa}$, which is slightly better than the existing results as presented in Table \ref{Table:comparison-theory} and in some sense tight by \cite[Proposition 2]{Zeng-SGD2019},
where 
a counter example is provided such that FGD cannot converge to the global optimum once the initialization radius is not smaller than $\sigma_r(X_r^*)$.
In the following, we provide some detailed comparisons between them.

Under the same smoothness and RSC assumptions, similar local linear submanifold convergence of SVRG-LR was established in \cite[Theorem 4.7]{Wang2017-universal-SVRG} in the framework of both statistical and optimization frameworks. We provide some remarks on comparisons between these two results in the following.
At first, according to \cite[Theorem 4.7]{Wang2017-universal-SVRG} and the discussion in \cite[Remark 4.8]{Wang2017-universal-SVRG}, the provable radius of the initialization ball is $\frac{2\sigma_r(X^*)}{15\kappa}$ for SVRG-LR,
which is smaller than that required for the proposed SVRG-SDP in this paper. In this sense, the convergence conditions in this paper are weaker than those used in \cite{Wang2017-universal-SVRG}.
Secondly, from the discussion in \cite[Remark 4.8]{Wang2017-universal-SVRG}, the contraction parameter $\rho$ associated with SVRG-LR lies in the interval $[\frac{2}{3}, \frac{5}{6}]$ when $m=O(\tau^2(X^*))$, while by \eqref{Eq:rho}, the contraction parameter $\rho_k$ associated with our proposed SVRG-SDP (approaching to $1/2$) shall be smaller than $\frac{2}{3}$ if a moderately large $m$ is adopted.
This implies in some sense that the proposed version of SVRG as an Option I generally converges faster than SVRG-LR, the Option II counterpart for problem \eqref{Eq:SD-determ-ncvx}.
Moreover, note that for any positive integer $m$, $\rho_k$ defined in \eqref{Eq:rho} is always less than $1$.
This means we have no requirement on $m$ to guarantee the linear convergence of SVRG-SDP, while a sufficiently large $m$ (at least in the order of $O(\tau^2(X^*))$) is required to guarantee the linear convergence of SVRG-LR studied in \cite{Wang2017-universal-SVRG}.
Also, the convergence analysis of mini-batch version of SVRG-LR (though obtainable) is missing in the literature \citep{Wang2017-universal-SVRG}.

In Algorithm \ref{alg:SVRG}, when $m=1$, SVRG-SDP reduces to FGD studied in \cite{Sanghavi-FGD2016}.
Under the similar smoothness and RSC assumptions, the local linear convergence of FGD was firstly established in \cite{Sanghavi-FGD2016} if the initialization lies in the ball ${\cal E}(\tilde{U}^0,U_r^*)\leq \frac{\sigma_r(X^*)}{10000\kappa^2 \tau^2(X^*)}$, and later the radius of the initialization ball was improved to $\frac{(\sqrt{2}-1)\sigma_r(X^*)}{2\kappa}$ in \cite{Zeng-SGD2019}.
Notice that Theorem \ref{Thm:svrg} above also holds for the case of $m=1$.
This shows that the provable radius of initialization ball for FGD is further improved to $\frac{8(\sqrt{2}-1)\sigma_r(X^*)}{9\kappa}$.

Compared to the stochastic version of FGD, i.e., stochastic gradient descent (SGD) method studied in \cite{Zeng-SGD2019}, the convergence rate of SVRG-SDP is linear while that of SGD is sublinear when adopting a diminishing step size. As shown in Table \ref{Table:comparison-theory}, the initialization ball for SVRG-SDP is also slightly larger than that of SGD with a fixed step size in \cite{Zeng-SGD2019}, while in this case, SGD only converges to a $O(\eta)$-neighborhood of a global minimum, where $\eta$ is the used fixed step size.


Compared to SVRG-I studied in \cite{Zeng-svrg2018}, 
we establish the linear submanifold convergence, a more precise characterization in the low-rank matrix factorization setting, instead of point convergence for SVRG-SDP. 
The convergence conditions used in this paper are also weaker than those in \cite{Zeng-svrg2018} in the following two aspects. The first one is the weaker assumption on the objective function $f$. From \cite[Assumption 1(b)]{Zeng-svrg2018}, each $f_i$ is required to be $(\mu,r)$-restricted strongly convex; while in this paper, we only require the $(\mu,r)$-restricted strong convexity of the average function $f = \frac{1}{n}\sum_{i=1}^n f_i$ as shown in Assumption \ref{Assump:objfun}(b),
which is a more realistic condition used in the literature \citep{Sanghavi-FGD2016,Wang2017-universal-SVRG,Wang2017-GD,Wang2017-ICML,Zeng-SGD2019}.
The second one is that the initialization radius is improved from
$\frac{(2+\sqrt{3})\cdot(\sqrt{2}-1)\sigma_{r}(X^*)}{6\kappa}$ in \cite{Zeng-svrg2018} to
$\frac{8(\sqrt{2}-1)\sigma_r(X_r^*)}{9\kappa}$ in this paper, as shown in Table \ref{Table:comparison-theory}.
Moreover,  there are lack of theoretical guarantees on the initialization schemes (though obtainable) in \cite{Zeng-svrg2018}, while in this paper, we fill this gap and provide the theoretical guarantees for the suggested initialization scheme as shown in the following Proposition \ref{Propos:initial-projgrad}.


\begin{remark}[Influence of batch size and update frequency]
\label{remark:iteration-complexity}
Note that the range of the mini-batch size $b$ is very flexible,
since its upper bound is $54(\sqrt{2}+1)\kappa \tau(X^*)$, where $\kappa$ and $\tau(X^*)$ are generally far more than $1$.
If the particular fixed step size $\eta_k \equiv b \eta_{\max}$ is adopted in Algorithm \ref{alg:SVRG}, then \eqref{Eq:local-linear-convergence} implies
\begin{align}
\label{Eq:local-linear-convergence1}
\mathbb{E}[{\cal E}(\tilde{U}^{k},U_r^*)]
\leq \left(\frac{1}{2}\left[1+\left(1-\frac{2\eta_{\max} L\gamma_0\sigma_r(X^*)}{27} b \right)^m \right]\right)^k  {\cal E}(\tilde{U}^0,U_r^*).
\end{align}
Regardless of the memory storage, the above inequality shows that a larger mini-batch size $b$ adopted implies a faster convergence speed of the iterates yielded in the outer loop of SVRG-SDP, as long as $b$ is smaller than the upper bound specified in \eqref{Eq:cond-b}.
Similar claim also holds for the choice of update frequency $m$ in the inner loop by the above inequality since the base $1-\frac{2\eta_{\max} L\gamma_0\sigma_r(X^*)}{27} b$ is positive and less than $1$ under the choice of $b$ in \eqref{Eq:cond-b}. In order to yield a fast convergence speed, a moderately large $m$ is usually required in the implementation of SVRG-SDP, as also implied by \eqref{Eq:local-linear-convergence1}. They are reasonable since more gradient information is exploited when a large mini-batch size or update frequency is adopted in the inner loop with other fixed parameters.

Besides the concerned linear convergence of the iterates yielded in the outer loop, it is also important to estimate the computational complexity in terms of the amount of gradient information used to achieve a prescribed precision.
Specifically, given a precision $\epsilon>0$, the above inequality gives an estimate of the computational complexity of SVRG-SDP as follows:
\begin{align*}
{\cal C}(\epsilon,b,m) \geq \frac{b m }{C(b, m)}\cdot \log \frac{\epsilon}{{\cal E}(\tilde{U}^0,U_r^*)},
\end{align*}
where $C(b,m):=\log \frac{1}{2}\left[1+\left(1-\frac{2\eta_{\max} L\gamma_0\sigma_r(X^*)}{27} b \right)^m \right]$.
On one hand, for some moderately large $m$, the above computational complexity ${\cal C}(\epsilon,b,m)$ approximates to $\frac{bm}{\log 2}\log \frac{{\cal E}(\tilde{U}^0,U_r^*)}{\epsilon}$, which shows that a smaller mini-batch size generally implies a lower computational complexity of SVRG-SDP to achieve a prescribed precision.
On the other hand, for some fixed $b$ and considering some moderately large $m$, the above estimate of computational complexity also implies that a smaller $m$ leads to a lower computational complexity. These are also verified by our later numerical experiments in Section \ref{sc:experiment}.
\end{remark}

In the next, we give a corollary to show the linear convergence of SVRG-SDP when adopting the considered SBB step size (\ref{Eq:sbb-stepsize}), where BB step size is its special case with $\epsilon=0$.

\begin{corollary}[Convergence of SVRG-SDP with SBB step size]
\label{Coro:3stepsizes}
Suppose that the assumptions of Theorem \ref{Thm:svrg} hold and that $m \geq \frac{1}{(\mu+\epsilon) b\eta_{\max}}$ for any $\epsilon\geq 0$.
Then the convergence claim in Theorem \ref{Thm:svrg} also holds for SVRG-SDP equipped with the SBB step size in (\ref{Eq:sbb-stepsize}) .
%
\end{corollary}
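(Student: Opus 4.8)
The plan is to reduce the SBB case to the fixed-step-size case already covered by Theorem \ref{Thm:svrg} by showing that the SBB step size $\eta_k$ in \eqref{Eq:sbb-stepsize} automatically falls in the admissible interval $(0, b\eta_{\max}]$ for every $k \geq 1$, and that the extra hypothesis $m \geq \frac{1}{(\mu+\epsilon)b\eta_{\max}}$ guarantees the lower end of the interval is used effectively in the contraction factor $\rho_k$. First I would establish a two-sided bound on $\eta_k$. For the upper bound, I would apply the restricted-strong-convexity lower estimate $\langle \tilde X^k - \tilde X^{k-1}, \tilde g_k - \tilde g_{k-1}\rangle \geq \mu\|\tilde X^k - \tilde X^{k-1}\|_F^2$, which is valid here since the iterates $\tilde X^k = \tilde U^k(\tilde U^k)^T$ are PSD of rank at most $r$; combined with the nonnegative term $\epsilon\|\tilde X^k - \tilde X^{k-1}\|_F^2$ in the denominator of \eqref{Eq:sbb-stepsize}, this yields $\eta_k \leq \frac{1}{m(\mu+\epsilon)}$. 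For the lower bound, I would use the Lipschitz estimate $\langle \tilde X^k - \tilde X^{k-1}, \tilde g_k - \tilde g_{k-1}\rangle \leq L\|\tilde X^k - \tilde X^{k-1}\|_F^2$ together with $\epsilon\|\tilde X^k - \tilde X^{k-1}\|_F^2$ to get $\eta_k \geq \frac{1}{m(L+\epsilon)} > 0$. (One must also check $\tilde X^k \neq \tilde X^{k-1}$, so the ratio is well-defined; if equality occurs the algorithm has already reached a fixed point and the claim is trivial.)

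Next I would verify the two conditions needed to invoke Theorem \ref{Thm:svrg} with a \emph{per-iteration} step size. The condition $\eta_k \leq b\eta_{\max}$: from $\eta_k \leq \frac{1}{m(\mu+\epsilon)}$, it suffices that $\frac{1}{m(\mu+\epsilon)} \leq b\eta_{\max}$, which is exactly the hypothesis $m \geq \frac{1}{(\mu+\epsilon)b\eta_{\max}}$. Since also $\eta_k > 0$, we have $\eta_k \in (0, b\eta_{\max}]$ for all $k \geq 1$ (for $k=0$ one uses the prescribed initial $\eta_0 \in (0, b\eta_{\max}]$, as in the statement of the step-size scheme). Then \eqref{Eq:linear-convergence} of Theorem \ref{Thm:svrg} applies verbatim with this sequence $\{\eta_k\}$, giving $\mathbb{E}[{\cal E}(\tilde U^k, U_r^*)] \leq \left(\prod_{j=0}^{k-1}\rho_j\right){\cal E}(\tilde U^0, U_r^*)$ with $\rho_j$ as in \eqref{Eq:rho}. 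Finally, to conclude genuine linear convergence rather than a vacuous bound, I would argue each $\rho_j$ is bounded away from $1$: since $\eta_j \geq \frac{1}{m(L+\epsilon)}$, the quantity $\frac{2}{27}\eta_j L\gamma_0\sigma_r(X^*)$ is bounded below by a positive constant $c_0 := \frac{2L\gamma_0\sigma_r(X^*)}{27m(L+\epsilon)}$ independent of $j$ (and by \eqref{Eq:cond-etak1} it is also below $1$), so $\rho_j \leq \frac{1}{2}[1 + (1-c_0)^m] =: \bar\rho < 1$, giving $\mathbb{E}[{\cal E}(\tilde U^k, U_r^*)] \leq \bar\rho^{\,k}\,{\cal E}(\tilde U^0, U_r^*)$.

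The main obstacle I anticipate is not any single estimate but the bookkeeping required to make the per-iteration version of Theorem \ref{Thm:svrg} rigorous: Theorem \ref{Thm:svrg} is stated for a deterministic sequence $\{\eta_k\}$, whereas the SBB step size $\eta_k$ depends on $\tilde X^k, \tilde X^{k-1}$ and is therefore random and measurable with respect to the history up to outer iteration $k$. One has to check that the proof of Theorem \ref{Thm:svrg} — in particular the descent recursion feeding into \eqref{Eq:linear-convergence} — only uses $\eta_k$ conditionally on the $k$-th outer iterate, so that conditioning on the sigma-algebra generated by $\tilde U^0, \ldots, \tilde U^k$ and then taking total expectations still yields the stated product bound. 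Because the deterministic upper bound $\eta_k \leq b\eta_{\max}$ holds almost surely (it followed from RSC, which is a structural property, not a probabilistic one), this conditioning argument should go through without genuine difficulty, but it is the step where care is most needed; everything else is a direct substitution into already-established inequalities.
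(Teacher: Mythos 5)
Your proposal is correct and follows essentially the same route as the paper: bound the SBB step size via $\frac{1}{m(L+\epsilon)} \leq \eta_k \leq \frac{1}{m(\mu+\epsilon)}$ using Lipschitz smoothness and restricted strong convexity, then use the hypothesis $m \geq \frac{1}{(\mu+\epsilon)b\eta_{\max}}$ to conclude $\eta_k \in (0, b\eta_{\max}]$ and invoke Theorem \ref{Thm:svrg}. Your additional remarks on the adaptivity (measurability) of $\eta_k$ and on $\rho_j$ being bounded away from $1$ are careful refinements that the paper leaves implicit, but they do not change the argument.
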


By the definition of SBB step size \eqref{Eq:sbb-stepsize} and Assumption \ref{Assump:objfun}, we have
\[
\frac{1}{m(L+\epsilon)} \leq \eta_k \leq \frac{1}{m(\mu + \epsilon)}.
\]
Thus, if $m \geq \frac{1}{(\mu + \epsilon) b\eta_{\max}}$, then $\eta_k \leq  b\eta_{\max}$ for any $k \in \mathbb{N}$. This implies the convergence claim in Theorem \ref{Thm:svrg} holds for SVRG-SDP equipped with SBB step size.

\subsection{Global Linear Convergence with Provable Initial Scheme}





By Theorem \ref{Thm:svrg}, the radius of the initialization ball plays a central role in the establishment of local linear convergence of SVRG-SDP.
Thus, it is crucial to show whether such proper initial point can be easily achieved in practice.
In the next, we show that such a desired initial point can be indeed achieved by the suggested initialization scheme (see Algorithm \ref{alg:initialization}) with the logarithmic computational complexity.

\begin{proposition}
\label{Propos:initial-projgrad}
Let Assumption \ref{Assump:objfun} hold with $1< \kappa \leq 64(\sqrt{2}-1)$.
Let $\tilde{U}^0$ be yielded by Algorithm \ref{alg:initialization}.
If
\begin{align}
\label{Eq:T}
T \ge \log_{1-\kappa^{-1}} \frac{16(\sqrt{2}-1)^2 \sigma^2_r(X^*)}{9\kappa \|X^*\|_F^2},
\end{align}
then ${\cal E}(\tilde{U}^0,U_r^*) \leq \frac{8(\sqrt{2}-1) \sigma_r(X^*)}{9\kappa}.$
\end{proposition}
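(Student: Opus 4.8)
The plan is to track the convergence of the projected gradient descent (ProjGD) iterates $\{X^t\}$ generated by Algorithm~\ref{alg:initialization} in Frobenius distance to $X^*$, and then translate a bound on $\|X^T - X^*\|_F$ into a bound on the submanifold metric ${\cal E}(\tilde U^0, U_r^*)$. First I would invoke the standard convergence theory of projected gradient descent applied to the convex problem $\min_{X \succeq 0} f(X)$: since Assumption~\ref{Assump:objfun}(a) gives that $f$ is $L$-Lipschitz differentiable and Assumption~\ref{Assump:objfun}(b) (together with the fact that both $X^t$ and $X^*$ are effectively confined to rank-$\le r$ behaviour, or more carefully, via the restricted strong convexity that still applies on the relevant iterates) gives a $\mu$-type lower curvature, ProjGD with step size $1/L$ contracts geometrically. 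Concretely, one expects $\|X^t - X^*\|_F^2 \le (1 - \kappa^{-1})^t \|X^0 - X^*\|_F^2$; with $X^0 = 0$ this reads $\|X^T - X^*\|_F^2 \le (1-\kappa^{-1})^T \|X^*\|_F^2$. I would spell out the one-step contraction inequality $\|X^{t} - X^*\|_F^2 \le (1-\mu/L)\|X^{t-1}-X^*\|_F^2$ using nonexpansiveness of $\mathrm{Proj}_{\mathbb{S}_+^p}$, the optimality of $X^*$ (so $X^* = \mathrm{Proj}_{\mathbb{S}_+^p}(X^* - \frac1L \nabla f(X^*))$ up to the appropriate stationarity condition), and co-coercivity plus strong convexity of the gradient step map.

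Next I would convert the bound on $\tilde X^0 = X^T$ into a bound on $\tilde U^0$. Here I would use a perturbation bound for low-rank factorization: if $\tilde U^0$ is a rank-$r$ factor of $\tilde X^0$ (best rank-$r$ approximation) and $U_r^*$ is a rank-$r$ factor of $X^*$, then a standard result (of the Davis–Kahan / matrix-perturbation type, as used in \cite{Sanghavi-FGD2016,Wang2017-universal-SVRG}) gives something like
\[
{\cal E}(\tilde U^0, U_r^*) \le \frac{1}{2(\sqrt{2}-1)\,\sigma_r(X^*)}\,\|\tilde X^0 - X^*\|_F^2,
\]
or a comparable inequality with a constant of this flavour. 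Combining this with $\|\tilde X^0 - X^*\|_F^2 \le (1-\kappa^{-1})^T\|X^*\|_F^2$, it suffices that
\[
\frac{(1-\kappa^{-1})^T\|X^*\|_F^2}{2(\sqrt{2}-1)\sigma_r(X^*)} \le \frac{8(\sqrt{2}-1)\sigma_r(X^*)}{9\kappa},
\]
i.e. $(1-\kappa^{-1})^T \le \frac{16(\sqrt{2}-1)^2 \sigma_r^2(X^*)}{9\kappa\,\|X^*\|_F^2}$, which upon taking $\log_{1-\kappa^{-1}}$ (noting $1-\kappa^{-1}\in(0,1)$ reverses the inequality) is exactly the stated condition \eqref{Eq:T} on $T$.

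The main obstacle I anticipate is justifying the linear contraction of ProjGD cleanly under only restricted strong convexity rather than global strong convexity: one must argue that along the ProjGD trajectory started from $0$, the relevant difference matrices $X^t - X^*$ (or $X^t - X^{t-1}$) lie in a set where the $(\mu,r)$-RSC inequality, or a usable consequence of it, can be applied — for instance because the iterates stay effectively low-rank, or because one only needs the RSC inequality between the rank-$\le r$ optimum $X^*$ and points, combined with convexity of $f$ on $\mathbb{S}_+^p$. A secondary technical point is pinning down the exact constant in the factorization perturbation bound so that the arithmetic matches \eqref{Eq:T} precisely; I would cite the analogous lemma already used for the $\gamma_0\sigma_r(X^*)$-type neighbourhoods in this paper and in \cite{Sanghavi-FGD2016}, and then it is a routine substitution. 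Everything else — the geometric decay, the logarithm manipulation, and the bookkeeping of the $(\sqrt2-1)$ and $\kappa$ factors — is straightforward.
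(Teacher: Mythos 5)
Your proposal follows essentially the same route as the paper's proof: the paper invokes \cite[Theorem 3.6]{Bubeck2014} for the one-step contraction $\|\tilde{X}^t-X^*\|_F^2 \le (1-\kappa^{-1})\|\tilde{X}^{t-1}-X^*\|_F^2$ (so with $X^0=0$, $\|\tilde{X}^T-X^*\|_F^2 \le (1-\kappa^{-1})^T\|X^*\|_F^2$), then converts to the submanifold metric via Lemma \ref{Lemm:dist-twomat}, i.e.\ ${\cal E}(\tilde{U}^0,U_r^*) \le \|\tilde{X}^0-X^*\|_F^2/\bigl(2(\sqrt{2}-1)\sigma_r(X^*)\bigr)$, and finishes with exactly the logarithm arithmetic you describe. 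The RSC-versus-global-strong-convexity subtlety you flag for the ProjGD contraction is not elaborated in the paper either, which simply cites the standard strongly convex rate.
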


Similar results of Proposition \ref{Propos:initial-projgrad} have been shown in \cite{Wang2017-GD} for the gradient descent method and in \cite{Zeng-SGD2019} for the stochastic gradient descent method.
According to \cite[Theorem 5.7]{Wang2017-GD}, the condition on $\kappa$ is $\kappa \in (1,4/3)$,
while the requirement in Proposition \ref{Propos:initial-projgrad} is $\kappa \in (1,64(\sqrt{2}-1)]$,
which significantly relaxes the condition in \cite{Wang2017-GD}.
According to \cite[Proposition 1]{Zeng-SGD2019}, the requirements on $\kappa$ of both papers are the same.
However, the requirement on the radius of initialization ball in this paper is slightly weaker that of \cite{Zeng-SGD2019}, where the radius is improved from $\frac{(\sqrt{2}-1)\sigma_r(X^*)}{2\kappa}$  in \cite{Zeng-SGD2019} to $\frac{8(\sqrt{2}-1)\sigma_r(X^*)}{9\kappa}$ in this paper.
The proof of Proposition \ref{Propos:initial-projgrad} is provided in Appendix C.

Based on Proposition \ref{Propos:initial-projgrad}, we can boost the local linear convergence of SVRG-SDP shown in Theorem \ref{Thm:svrg} to the following global linear convergence.

\begin{theorem}[Global linear convergence]
\label{Thm:svrg-globalconv}
Suppose that Assumption \ref{Assump:objfun} holds with $1<\kappa \leq 64(\sqrt{2}-1)$.
Let $\{\tilde{U}^k\}$ be a sequence generated by Algorithm \ref{alg:SVRG} with $\tilde{U}^0$ via the initial scheme in Algorithm \ref{alg:initialization} (where $T$ satisfies \eqref{Eq:T}), $b$ satisfying \eqref{Eq:cond-b}, $m\geq 1$, and $\eta_k \in (0,b\eta_{\max}]$ for $k\in \mathbb{N}$.
Then $\{\tilde{U}^k\}$ converges to a global minimum exponentially fast.
\end{theorem}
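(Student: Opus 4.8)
The plan is to combine the two main ingredients already established in the excerpt: the logarithmic-complexity initialization guarantee of Proposition \ref{Propos:initial-projgrad}, and the local linear submanifold convergence of Theorem \ref{Thm:svrg}. First I would observe that, by hypothesis, $\tilde{U}^0$ is produced by Algorithm \ref{alg:initialization} with $T$ satisfying \eqref{Eq:T}. Proposition \ref{Propos:initial-projgrad} (whose hypothesis $1<\kappa\le 64(\sqrt2-1)$ is exactly the assumption we impose) then guarantees that
\[
{\cal E}(\tilde{U}^0,U_r^*) \leq \frac{8(\sqrt{2}-1)\sigma_r(X^*)}{9\kappa}.
\]
Strictly speaking one should separate two cases: either $\tilde X^0=X^*$ already (so ${\cal E}(\tilde U^0,U_r^*)=0$ and the sequence is trivially at the optimum after the factorization step, or remains at a point of the group orbit), or $0<{\cal E}(\tilde{U}^0,U_r^*)<\frac{8(\sqrt{2}-1)\sigma_r(X^*)}{9\kappa}$, which is precisely the initialization hypothesis required by Theorem \ref{Thm:svrg}. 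In the degenerate first case the conclusion is immediate; in the generic second case we proceed to invoke Theorem \ref{Thm:svrg}.

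Next I would apply Theorem \ref{Thm:svrg} directly with the given $b$ (satisfying \eqref{Eq:cond-b}), $m\ge1$, and step sizes $\eta_k\in(0,b\eta_{\max}]$, obtaining for every positive integer $k$
\[
\mathbb{E}[{\cal E}(\tilde{U}^{k},U_r^*)] \leq \Bigl(\prod_{j=0}^{k-1}\rho_j\Bigr)\cdot {\cal E}(\tilde{U}^0,U_r^*).
\]
Then I would use the bounds \eqref{Eq:cond-etak1} established earlier, which show $\frac{2}{27}\eta_k L\gamma_0\sigma_r(X^*)\le\frac{\sqrt2-1}{54\kappa\tau(X^*)}<1$, hence each factor satisfies $\rho_j\in(0,1)$ and moreover $\rho_j\le\bar\rho$ for a uniform constant $\bar\rho<1$ (for instance one may take $\bar\rho:=\frac12\bigl[1+\bigl(1-\frac{2}{27}\eta_{\min}L\gamma_0\sigma_r(X^*)\bigr)^m\bigr]$ with $\eta_{\min}:=\inf_k\eta_k>0$, or in the fixed-step-size case simply read off the explicit rate in \eqref{Eq:local-linear-convergence}). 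Consequently $\prod_{j=0}^{k-1}\rho_j\le\bar\rho^{\,k}\to0$ geometrically, so $\mathbb{E}[{\cal E}(\tilde{U}^{k},U_r^*)]\to0$ at a linear rate; this is the statement that $\{\tilde U^k\}$ converges to the orbit of a global minimum exponentially fast, i.e.\ to a global minimum (up to the orthogonal-group action, which leaves $X^*=\tilde U^k(\tilde U^k)^T$ unchanged in the limit).

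The genuinely substantive work all sits inside Theorem \ref{Thm:svrg} and Proposition \ref{Propos:initial-projgrad}, so the "obstacle" here is essentially bookkeeping: one must verify that the standing hypotheses of Theorem \ref{Thm:svrg-globalconv} imply, term by term, the hypotheses of both prior results — in particular that the condition number range $1<\kappa\le64(\sqrt2-1)$ is shared, that $T$ satisfying \eqref{Eq:T} is exactly what Proposition \ref{Propos:initial-projgrad} needs, and that the initialization radius $\frac{8(\sqrt2-1)\sigma_r(X^*)}{9\kappa}$ delivered by the initialization scheme matches (with the boundary case handled separately) the strict inequality ${\cal E}(\tilde U^0,U_r^*)<\frac{8(\sqrt2-1)\sigma_r(X^*)}{9\kappa}$ demanded by Theorem \ref{Thm:svrg}. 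One should also note that the claim "converges to a global minimum" is naturally read in the submanifold sense — convergence of $\tilde U^k$ to the set $\{U_r^*R:R\in{\cal O}\}$, equivalently convergence of $\tilde X^k=\tilde U^k(\tilde U^k)^T$ to $X^*$ — and I would state this explicitly. With these verifications in place the global linear submanifold convergence follows at once from chaining the two theorems.
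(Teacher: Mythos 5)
Your proposal is correct and takes essentially the same route as the paper, which likewise proves the theorem by chaining Proposition \ref{Propos:initial-projgrad} (the ProjGD initialization with $T$ satisfying \eqref{Eq:T} lands $\tilde{U}^0$ in the ball of radius $\frac{8(\sqrt{2}-1)\sigma_r(X^*)}{9\kappa}$, itself reached at a linear rate) with the local linear submanifold convergence of Theorem \ref{Thm:svrg}. Your additional bookkeeping on the degenerate/boundary initialization case and on bounding $\rho_j\le\bar\rho<1$ uniformly is slightly more careful than the paper's summary-level argument but does not alter the approach.
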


The proof of this theorem is summarized as follows. By Theorem \ref{Thm:svrg}, we show that the proposed SVRG method converges to a global minimum exponentially fast starting from an initial guess close to this global minimum, and then according to Proposition \ref{Propos:initial-projgrad}, we show that the suggested initialization algorithm (see Algorithm \ref{alg:initialization}) can find the desired initial guess permitted to the linear convergence with an order of logarithmic computational complexity, starting from the trivial origin point. In other words, the convergence speed of the suggested initial scheme in Algorithm \ref{alg:initialization} is also linear to reach the desired initial precision. Therefore, combining Proposition \ref{Propos:initial-projgrad} with the local linear convergence in Theorem \ref{Thm:svrg}, the whole convergence speed of SVRG-SDP equipped with such initial scheme is linear starting from the origin point. This implies the global linear convergence of SVRG-SDP in Theorem \ref{Thm:svrg-globalconv}.

\section{Second-Order Descent Lemma and Proof of Theorem \ref{Thm:svrg}}
\label{sc:proof}

In this section, we present the key proofs of our main theorem (i.e., Theorem \ref{Thm:svrg}). The proof idea is motivated by \cite{Zeng-svrg2018} with an extension. Specifically, to prove Theorem \ref{Thm:svrg}, we need the following \textit{second-order descent} lemma which estimates the progress made by a single iterate of the inner loop and is characterized in the manifold metric instead of the Euclidean metric as in \cite[Lemma 1]{Zeng-svrg2018}. 

\begin{lemma}[Second-order descent lemma]
\label{Lemm:2nd-order-descent}
Let $\{U^t\}_{t=0}^{m}$ be the sequence at the $k$-th inner loop. Let Assumptions \ref{Assump:objfun} hold with $1<\kappa \leq 64(\sqrt{2}-1)$.
Let $b$ satisfy \eqref{Eq:cond-b}, and
let $\eta_k \in (0,b\eta_{\max})$ for any $k\in \mathbb{N}$, where $\eta_{\max}$ is specified in \eqref{Eq:eta-max}.
If $0<{\cal E}(\tilde{U}^k,U_r^*)< \gamma_0 \sigma_r(X^*)$ and $0<{\cal E}(U^t,U_r^*)< \gamma_0 \sigma_r(X^*)$ with $\gamma_0$ specified in \eqref{Eq:kappa}, then
\begin{align}
\mathbb{E}_{{\cal I}_t}[{\cal E}(U^{t+1},U_r^*)]
&\leq
\left( 1-\eta_k L \gamma_0 \sigma_r(X^*) + \eta_k^2 B_1\right) \cdot {\cal E}(U^t,U_r^*) \nonumber \\
&+ \eta_k L {\cal E}^2(U^t,U_r^*) +\eta_k^2 B_1 {\cal E}(\tilde{U}^k,U_r^*), \label{Eq:recur-innerloop}
\end{align}
where $B_1 = \frac{2}{b}(2+\sqrt{\gamma_0})^2L^2 \|X^*\|_2 B$.
\end{lemma}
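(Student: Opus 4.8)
The proof will follow the one-step strategy of \cite[Lemma~1]{Zeng-svrg2018}, but carried out in the orthogonally invariant metric. Write the semi-stochastic direction as $G^t:=\frac1b\sum_{i_t\in{\cal I}_t}\bigl(\nabla f_{i_t}(X^t)-\nabla f_{i_t}(\tilde{X}^k)\bigr)+\nabla f(\tilde{X}^k)$, so the inner update is $U^{t+1}=U^t-\eta_k G^tU^t$ and, since ${\cal I}_t$ is a uniform size-$b$ sample, $\mathbb{E}_{{\cal I}_t}[G^t]=\nabla f(X^t)$. Let $R^t\in{\cal O}$ attain the minimum defining ${\cal E}(U^t,U_r^*)$ and set $\Delta^t:=U^t-U_r^*R^t$, so ${\cal E}(U^t,U_r^*)=\|\Delta^t\|_F^2$. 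Since $R^t$ is feasible (though not optimal) for $U^{t+1}$, I start from ${\cal E}(U^{t+1},U_r^*)\le\|U^{t+1}-U_r^*R^t\|_F^2$ and expand the square; taking $\mathbb{E}_{{\cal I}_t}[\cdot]$ and splitting $G^tU^t=\nabla f(X^t)U^t+\zeta^tU^t$ with the mean-zero remainder $\zeta^t:=\frac1b\sum_{i_t\in{\cal I}_t}\bigl(\nabla f_{i_t}(X^t)-\nabla f_{i_t}(\tilde{X}^k)\bigr)-\bigl(\nabla f(X^t)-\nabla f(\tilde{X}^k)\bigr)$ gives
\begin{align*}
\mathbb{E}_{{\cal I}_t}[{\cal E}(U^{t+1},U_r^*)]\le {\cal E}(U^t,U_r^*)-2\eta_k\langle\Delta^t,\nabla f(X^t)U^t\rangle+\eta_k^2\|\nabla f(X^t)U^t\|_F^2+\eta_k^2\,\mathbb{E}_{{\cal I}_t}\|\zeta^tU^t\|_F^2 .
\end{align*}
The three nontrivial terms are then treated separately: the first two produce the descent coefficient and the ${\cal E}^2$ residual, while the last produces the two $B_1$-terms.

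For the descent, use that $\nabla f(X^t)$ is symmetric, so that $\langle\Delta^t,\nabla f(X^t)U^t\rangle=\langle\nabla f(X^t),U^t(\Delta^t)^T\rangle$, and expanding $\Delta^t(\Delta^t)^T$ yields the identity $2\langle\Delta^t,\nabla f(X^t)U^t\rangle=\langle\nabla f(X^t),X^t-X^*\rangle+\langle\nabla f(X^t),\Delta^t(\Delta^t)^T\rangle$. For the first summand, the $(\mu,r)$-restricted strong convexity of $f$ together with $f(X^t)\ge f(X^*)$ (feasibility of $X^t\succeq0$ for \eqref{Eq:SD-stoch}) gives a lower bound in terms of $\|X^t-X^*\|_F^2$, which is then converted into the submanifold metric through the standard geometric inequality $\|X^t-X^*\|_F^2\ge 2(\sqrt2-1)\sigma_r(X^*)\,{\cal E}(U^t,U_r^*)$ valid on ${\cal N}_{\gamma_0}$; since $\mu(\sqrt2-1)=L\gamma_0$, this is exactly the coefficient $L\gamma_0\sigma_r(X^*)$ in \eqref{Eq:recur-innerloop}. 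The indefinite summand $\langle\nabla f(X^t),\Delta^t(\Delta^t)^T\rangle$ is bounded by Cauchy--Schwarz with $\|\nabla f(X^t)\|_F=\|\nabla f(X^t)-\nabla f(X^*)\|_F\le L\|X^t-X^*\|_F$ and $\|\Delta^t(\Delta^t)^T\|_F\le{\cal E}(U^t,U_r^*)$; re-expressing $\|X^t-X^*\|_F$ via $\Delta^t$ and $U_r^*R^t$ and using $\eta_k<b\eta_{\max}\le\frac1{4BL}$ (see \eqref{Eq:cond-etak}) to absorb the $\eta_k^2\|\nabla f(X^t)U^t\|_F^2$ term into this estimate leaves exactly the residual $\eta_kL\,{\cal E}^2(U^t,U_r^*)$. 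Note it is the presence of $R^t$ and the above geometric inequality that lets the descent be extracted without assuming $U^t$ lies in a prescribed ``branch'' of the factorization — the point where the argument genuinely departs from the Euclidean Lemma~1 of \cite{Zeng-svrg2018}.

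For the variance term, $\mathbb{E}_{{\cal I}_t}\|\zeta^tU^t\|_F^2\le\|U^t\|_2^2\,\mathbb{E}_{{\cal I}_t}\|\zeta^t\|_F^2$; the mini-batch of size $b$ gives $\mathbb{E}_{{\cal I}_t}\|\zeta^t\|_F^2\le\frac1b\cdot\frac1n\sum_{i=1}^n\|\nabla f_i(X^t)-\nabla f_i(\tilde{X}^k)\|_F^2\le\frac{L^2}{b}\|X^t-\tilde{X}^k\|_F^2$ by Assumption~\ref{Assump:objfun}(a), and then $\|X^t-\tilde{X}^k\|_F\le\|X^t-X^*\|_F+\|\tilde{X}^k-X^*\|_F$ with, on ${\cal N}_{\gamma_0}$, $\|X^t-X^*\|_F\le(\|U^t\|_2+\|U_r^*\|_2)\|\Delta^t\|_F$ and $\|U^t\|_2\le(1+\sqrt{\gamma_0})\sqrt{\|X^*\|_2}$ (and likewise for $\tilde{U}^k$) yields the factor $(2+\sqrt{\gamma_0})^2L^2\|X^*\|_2$; bounding finally $\|U^t\|_2^2=\|X^t\|_2\le\|U^t(U^t)^T\|_F\le B$ produces the constant $B_1=\frac2b(2+\sqrt{\gamma_0})^2L^2\|X^*\|_2B$ and the two terms $\eta_k^2B_1{\cal E}(U^t,U_r^*)$ and $\eta_k^2B_1{\cal E}(\tilde{U}^k,U_r^*)$. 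Collecting the three estimates gives \eqref{Eq:recur-innerloop}. The main obstacle I expect is the descent part: correctly exploiting that $R^t$ is optimal only at step $t$, pushing the Euclidean decrease through the sharp inequality $\|UU^T-U_r^*(U_r^*)^T\|_F^2\ge2(\sqrt2-1)\sigma_r(X^*)\,{\cal E}(U,U_r^*)$ so the coefficient comes out exactly $L\gamma_0\sigma_r(X^*)$, and balancing the indefinite term $\langle\nabla f(X^t),\Delta^t(\Delta^t)^T\rangle$ together with $\eta_k^2\|\nabla f(X^t)U^t\|_F^2$ so that only the stated residual $\eta_kL\,{\cal E}^2(U^t,U_r^*)$ — and no uncontrolled lower-order contribution — survives; this constant bookkeeping in the submanifold metric is the delicate step.
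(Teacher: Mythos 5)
Your setup, the expansion around $V_{U^t}^*=U_r^*R^t$, the bias--variance split of the second moment, and the variance estimate that produces $B_1$ all coincide with the paper's argument (Lemmas \ref{Lemm:bound-innerproduct} and \ref{Lemm:bound-variance}). The gap is in the descent part, and it is exactly at the place you flag as ``delicate'': your bookkeeping does not close, because you discard the one resource the paper relies on. The paper does not merely use $f(X^t)\ge f(X^*)$; it uses the sharper bound $f(X^t)-f(X^*)\ge \frac{1}{2L}\|{\cal P}_{U^t}\nabla f(X^t)\|_F^2$, obtained from the feasibility of the projected point $\bar X^t=X^t-\frac1L{\cal P}_{U^t}\nabla f(X^t){\cal P}_{U^t}$ (Lemma \ref{Lemm:feasibility}(b)). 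This gradient-norm reservoir is what cancels both (i) the $-\frac{1}{4L}\|{\cal P}_{U^t}\nabla f(X^t)\|_F^2$ produced when the indefinite term $\langle\nabla f(X^t),\Delta^t(\Delta^t)^T\rangle$ is handled by Young's inequality (after projecting with $({\bf I}-{\cal P}_{U^t})X^*=0$, Lemma \ref{Lemm:feasibility}(c)), leaving exactly $L\,{\cal E}^2(U^t,U_r^*)$, and (ii) the squared-mean term $\eta_k^2\|\nabla f(X^t)U^t\|_F^2\le\eta_k^2B\|{\cal P}_{U^t}\nabla f(X^t)\|_F^2$, via $\eta_k\le\frac{1}{4BL}$ in \eqref{Eq:cond-etak}. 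Without it, neither of your proposed absorptions can yield the stated coefficients.

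Concretely: Cauchy--Schwarz with $\|\nabla f(X^t)\|_F\le L\|X^t-X^*\|_F$ and Lemma \ref{Lemm:X-Xr*} gives $|\langle\nabla f(X^t),\Delta^t(\Delta^t)^T\rangle|\le L(2+\sqrt{\gamma_0})\sqrt{\|X^*\|_2}\,{\cal E}^{3/2}(U^t,U_r^*)$, an ${\cal E}^{3/2}$ term. On the permitted ball ${\cal E}<\gamma_0\sigma_r(X^*)\le\|X^*\|_2$ this dominates ${\cal E}^2$, so it cannot be absorbed into $\eta_kL\,{\cal E}^2$; and folding it into the linear part would need $(2+\sqrt{\gamma_0})\sqrt{\|X^*\|_2\,{\cal E}}\le c\,\gamma_0\sigma_r(X^*)$ with $c<1$, which fails since $(2+\sqrt{\gamma_0})\sqrt{\|X^*\|_2\gamma_0\sigma_r(X^*)}\ge 2\sqrt{\tau(X^*)/\gamma_0}\,\gamma_0\sigma_r(X^*)>\gamma_0\sigma_r(X^*)$ — and in any case \eqref{Eq:recur-innerloop} leaves no slack in the coefficient $1-\eta_kL\gamma_0\sigma_r(X^*)+\eta_k^2B_1$. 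Likewise, converting $\eta_k^2\|\nabla f(X^t)U^t\|_F^2$ into ${\cal E}$-terms through $\eta_k\le\frac1{4BL}$ and $\|\nabla f(X^t)\|_F\le L\|X^t-X^*\|_F$ yields a linear term with coefficient $\frac{\eta_kL(2+\sqrt{\gamma_0})^2\|X^*\|_2}{4}\ge\eta_kL\sigma_r(X^*)>\eta_kL\gamma_0\sigma_r(X^*)$, which would wipe out the descent entirely. To prove the lemma as stated you need to (a) keep the $\frac{1}{2L}\|{\cal P}_{U^t}\nabla f(X^t)\|_F^2$ bonus from RSC plus the feasibility of $\bar X^t$, and (b) bound the cross term by Young's inequality against that reservoir rather than by Cauchy--Schwarz; the step-size restriction then only has to neutralize the $B\|{\cal P}_{U^t}\nabla f(X^t)\|_F^2$ piece, which is how the exact terms in \eqref{Eq:recur-innerloop} emerge.
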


We call this lemma as \textit{Second-order descent lemma} since both linear and quadratic terms of ${\cal E}(U^t,U_r^*)$ are involved in the upper bound in the right-hand side of \eqref{Eq:recur-innerloop}. This is in general different from the literature (say, \cite{Johnson-Zhang-svrg2013,Tan-Ma-bbsvrg2016}) to yield the linear convergence of SVRG methods.

\medskip

\begin{proof}[Proof of Lemma \ref{Lemm:2nd-order-descent}]
Let $v_k^t:= \frac{1}{b}\sum_{i_t \in {\cal I}_t}(\nabla f_{i_t}(X^t) - \nabla f_{i_t}(\tilde{X}^k)) + \nabla f(\tilde{X}^k)$,
$R_{U^t}^* = \arg \min_{R\in {\cal O}} \|{U^t}-U_r^*R\|_F^2$, and $V_{U^t}^*: = U_r^* R_{U^t}^*.$
Note that
\begin{align*}
& {\cal E}(U^{t+1},U_r^*)
 = \min_{R\in {\cal O}} \|U^{t+1} - U_r^*R\|_F^2\\
& \leq \|U^{t+1}-V_{U^t}^*\|_F^2\\
& = \|U^{t}-V_{U^t}^*\|_F^2 + 2\langle U^{t+1}-U^t, U^t-V_{U^t}^* \rangle + \|U^{t+1}-U^t\|_F^2\\
& = {\cal E}(U^t,U_r^*) - 2\eta_k \langle v_k^tU^t, U^t - V_{U^t}^*\rangle + \eta_k^2 \|v_k^tU^t\|_F^2,
\end{align*}
which implies
\begin{align}
{\mathbb E}_{{\cal I}_t}[{\cal E}(U^{t+1},U_r^*)]
\leq {\cal E}(U^t,U_r^*) - 2\eta_k {\mathbb E}_{{\cal I}_t}[\langle v_k^tU^t, U^t - V_{U^t}^*\rangle]
+ \eta_k^2 {\mathbb E}_{{\cal I}_t}[\|v_k^tU^t\|_F^2]. \label{Eq:keyineq1}
\end{align}

The bounds of both $2{\mathbb E}_{{\cal I}_t}[\langle v_k^tU^t, U^t - V_{U^t}^*\rangle]$ and ${\mathbb E}_{{\cal I}_t}[\|v_k^tU^t\|_F^2]$ can be estimated respectively by Lemma \ref{Lemm:bound-innerproduct} and Lema \ref{Lemm:bound-variance} whose proofs are given in Appendix B.
Combining these bounds ((\ref{Eq:T1-lowerbound}) and (\ref{Eq:bound-variance}) in Appendix B) for (\ref{Eq:keyineq1}) yields
\begin{align*}
&{\mathbb E}_{{\cal I}_t}[{\cal E}(U^{t+1},U_r^*)]\\
&\leq {\cal E}(U^t,U_r^*) + \eta_k L {\cal E}^2(U^t,U_r^*)
-\eta_k (\sqrt{2}-1)\mu \sigma_r(X^*) {\cal E}(U^t,U_r^*) - \eta_k(\frac{1}{4L}-\eta_k B) \|{\cal P}_{U^t}\nabla f(X^t)\|_F^2 \\
&+ \eta_k^2{ \frac{2}{b}}(2+\sqrt{\gamma_0})^2 L^2 \|X^*\|_2 B {\cal E}(U^t,U_r^*)
+\eta_k^2{ \frac{2}{b}}(2+\sqrt{\gamma_0})^2 L^2 \|X_r^*\|_2 B {\cal E}(\tilde{U}^k, U_r^*)
\\
&\leq {\cal E}(U^t,U_r^*)  -\eta_k \left( (\sqrt{2}-1)\mu\sigma_r(X^*)-\eta_kB_1\right){\cal E}(U^t,U_r^*)
+\eta_k^2B_1 {\cal E}(\tilde{U}^k,U_r^*) + \eta_k L {\cal E}^2(U^t,U_r^*)\\
&= \left( 1-\eta_k L \gamma_0 \sigma_r(X_r^*) + \eta_k^2 B_1\right) \cdot {\cal E}(U^t,U_r^*)
+ \eta_k L {\cal E}^2(U^t,U_r^*) +\eta_k^2 B_1 {\cal E}(\tilde{U}^k,U_r^*) ,
\end{align*}
where $B_1 := {\frac{2}{b}}(2+\sqrt{\gamma_0})^2L^2 \|X_r^*\|_2 B$, the second inequality holds for $\eta_k \leq \frac{1}{4BL}$
by \eqref{Eq:cond-etak},
and
and the final equality holds for $\gamma_0 = \frac{(\sqrt{2}-1)\mu}{L}$.
Thus, the above inequality yields \eqref{Eq:recur-innerloop}.
\end{proof}

Equipped with Lemma \ref{Lemm:2nd-order-descent}, we are able to present the proof of Theorem \ref{Thm:svrg} as follows.
\medskip

\begin{proof}[Proof of Theorem \ref{Thm:svrg}]
We prove this theorem by induction.
We firstly develop the \textit{contraction} between two iterates of the outer loop based on Lemma \ref{Lemm:2nd-order-descent}, and then establish the locally linear convergence recursively.

We assume that at the $k$-th inner loop, ${\cal E}(\tilde{U}^k,U_r^*)<\frac{8(\sqrt{2}-1)\sigma_r(X_r^*)}{9\kappa}$ and ${\cal E}({U}^t,U_r^*)<\frac{8(\sqrt{2}-1)\sigma_r(X_r^*)}{9\kappa}$ for $k\geq 1$ and $1\leq t\leq m$,
then \eqref{Eq:recur-innerloop} still holds due to $\frac{8(\sqrt{2}-1)\sigma_r(X_r^*)}{9\kappa}=\frac{8}{9}\gamma_0 \sigma_r(X^*) < \gamma_0 \sigma_r(X_r^*)$. This implies
\begin{align*}
&\mathbb{E}_{{\cal I}_t}[{\cal E}(U^{t+1},U_r^*)]\\
&\leq \left(1-\frac{1}{9}\eta_k L \gamma_0 \sigma_r(X^*) + \eta_k^2B_1 \right) \cdot {\cal E}(U^t,U_r^*)
+ \eta_k^2 B_1 {\cal E}(\tilde{U}^k,U_r^*)\nonumber\\
&\leq \left(1-\frac{1}{9}\eta_k L \gamma_0 \sigma_r(X^*) + \eta_k^2B_1 \right) \cdot {\cal E}(U^t,U_r^*)
+ \frac{1}{2}\left( \frac{1}{9}\eta_k L \gamma_0 \sigma_r(X^*) - \eta_k^2B_1 \right) {\cal E}(\tilde{U}^k,U_r^*)
\end{align*}
where the first inequality holds for ${\cal E}({U}^t,U_r^*)<\frac{8}{9}\gamma_0\sigma_r(X_r^*)$,
and the second inequality holds for $\eta_k \leq \frac{b \gamma_0\sigma_r(X_r^*)}{54(2+\sqrt{\gamma_0})^2 \|X^*\|_2 LB}$ (implying $ \eta_k^2 B_1 \leq \frac{1}{27}\eta_kL\gamma_0\sigma_r(X^*)$).
Adding the term $-\frac{1}{2}{\cal E}(\tilde{U}^k,U_r^*)$ to both sides of the above inequality yields
\begin{align}
&\mathbb{E}_{{\cal I}_t}[{\cal E}(U^{t+1},U_r^*)] -\frac{1}{2}{\cal E}(\tilde{U}^k,U_r^*) \nonumber\\
&\leq \left(1-\frac{1}{9}\eta_k L \gamma_0 \sigma_r(X^*) + \eta_k^2B_1 \right) \left({\cal E}(U^{t},U_r^*) -\frac{1}{2}{\cal E}(\tilde{U}^k,U_r^*)\right) \nonumber\\
&\leq \left(1-\frac{2}{27}\eta_k L \gamma_0 \sigma_r(X^*) \right) \left({\cal E}(U^{t},U_r^*)-\frac{1}{2}{\cal E}(\tilde{U}^k,U_r^*)\right),
\label{Eq:contract-innerloop}
\end{align}
where the second inequality holds by $\eta_k^2 B_1 \leq \frac{1}{27}\eta_kL\gamma_0\sigma_r(X^*)$ again.
Note that for any $k\in \mathbb{N}$, $1-\frac{2}{27}\eta_k L \gamma_0 \sigma_r(X^*)\in (0,1)$ by \eqref{Eq:cond-etak1} and $U^0 = \tilde{U}^k$ at the $k$-th inner loop, then based on \eqref{Eq:contract-innerloop}, we have
\begin{align*}
\mathbb{E}[{\cal E}(\tilde{U}^{k+1},U_r^*)]
\leq \frac{1}{2}\left( 1+ \left(1-\frac{2}{27}\eta_k L \gamma_0 \sigma_r(X^*) \right)^m\right){\cal E}(\tilde{U}^k,U_r^*),
\end{align*}
which implies for any positive integer $k$
\begin{align*}
\mathbb{E}[{\cal E}(\tilde{U}^{k},U_r^*)] \leq \left(\prod_{j=0}^{k-1} \rho_j\right) \cdot {\cal E}(\tilde{U}^0,U_r^*),
\end{align*}
where $\rho_j:=\frac{1}{2}\left( 1+ \left(1-\frac{2}{27}\eta_j L \gamma_0 \sigma_r(X^*) \right)^m\right)$.
This finishes the proof of the theorem.
\end{proof}

\section{Numerical Experiments}
\label{sc:experiment}

In this section, we conduct a series of experiments in matrix sensing to show the effectiveness of the proposed method as well as the effects of the associated parametric parameters including the choices of step size,  mini-batch size and update frequency involved in the inner loop.

\subsection{Experimental settings}
\label{sc:experimental-settings}
The following matrix sensing problem
\begin{equation*}
\min_{X\succeq 0} f(X):= \frac{1}{2n} \sum_{i=1}^n (y_i - \langle A_i,X \rangle)^2,
\end{equation*}
is considered in this paper, where $X \in \mathbb{S}^p$ is a low-rank matrix with rank $r<p$, $A_i \in \mathbb{R}^{p\times p}$ is a sub-Gaussian independent measurement matrix of the $i$-th sample, $y_i \in \mathbb{R}$, and $n\in \mathbb{N}$ is the sample size.

Specifically, in these experiments, we let $p=100$, the optimal matrix $X^* = U^*(U^*)^T$ be a low-rank matrix with rank $5$ and the sample size $n=10p$.
In order to demonstrate the effectiveness of the proposed method, we consider the following batch and stochastic methods as competitors: factored gradient descent (FGD) method \citep{Sanghavi2013}, stochastic gradient descent (SGD) methods \citep{Zeng-SGD2019} with a fixed step size (SGD-fix) or diminishing step sizes (SGD-diminish), SVRG with with Option I (SVRG-I, where the update in the inner loop of Algorithm \ref{alg:SVRG} is the traditional one, $\nabla f_{i_t}(X^t)U^t - (\nabla f_{i_t}(\tilde{X}^k)-\nabla f(\tilde{X}^k))\tilde{U}^k$), and SVRG-LR \citep{Wang2017-universal-SVRG} which is of Option II.
For all algorithms, we take $r=r^*$ and construct the initialization empirically via $10$ iterations of the projected gradient descent as similar to \cite[Section V.A]{Zeng-SGD2019}. For FGD, we use a step size $\eta = \frac{1}{4L\|X^*\|_F}$, where $L$ is the Lipschitz constant of $\nabla f(X)$.
For SGD, we set a more consecutive fixed step size $\bar{\eta} = \frac{1}{8L\|X^*\|_F}$ and a diminishing step size $\eta_k = \frac{\bar{\eta}}{k+1}$.
For SVRG type of methods, we set $b=1$, $m=n$ and $\eta= \frac{1}{4L\|X^*\|_F}$. All these parametric settings are set either theoretically or empirically optimal.

There are four experiments conducted in this section. The first three experiments are conducted to show the effects of algorithmic parameters including the choice of mini-batch size, update frequency and step size strategy involved in the inner loop, while the last experiment is implemented to demonstrate the effectiveness of the proposed method via comparing with these batch and stochastic methods presented in the previous.

In all experiments, we use the \textit{relative recovery error} defined by $\frac{\|\tilde{X}^k-X^*\|_F^2}{\|X^*\|_F^2}$ as the evaluation metric, where $\tilde{X}^k$ is the estimate yielded at the $k$-th iteration of outer loop as presented in Algorithm \ref{alg:SVRG}.

\subsection{Effect of mini-batch size}

In this experiment, we study the effect of mini-batch size. Under the same settings as described in Section \ref{sc:experimental-settings}, we particularly consider five choices of mini-batch sizes, i.e., $\{1, 2, 5, 8, 10, 20\}$ for the proposed method. We set the update frequency $m=n$ for all five choices of mini-batch size.
The experiment results are shown in Figure \ref{Fig:mini-batch-size}.
From Figure \ref{Fig:mini-batch-size}(a), the proposed method exhibits the linear convergence for all the choices of mini-batch size, which verifies the established theoretical results in Theorem \ref{Thm:svrg} and Theorem \ref{Thm:svrg-globalconv}.
Particularly, concerning the convergence speed of iterates yielded in the outer loop of SVRG-SDP, a larger mini-batch size generally implies a faster convergence speed. This verifies our theoretical discussion in Remark \ref{remark:iteration-complexity}.  Moreover, we also show the effect of the mini-batch size to the computational complexity of the proposed method in terms of running time. As demonstrated by Figure \ref{Fig:mini-batch-size}(b), a smaller mini-batch size leads to lower computational complexity to achieve the same recovery precision. This also coincides with our computational complexity analysis as presented in Remark \ref{remark:iteration-complexity}. Motivated by this experiment, we set the mini-batch size of SVRG-SDP to be 1 in the following experiments.

\begin{figure}[!t]
\begin{minipage}[b]{0.49\linewidth}
\centering
\includegraphics*[scale=.48]{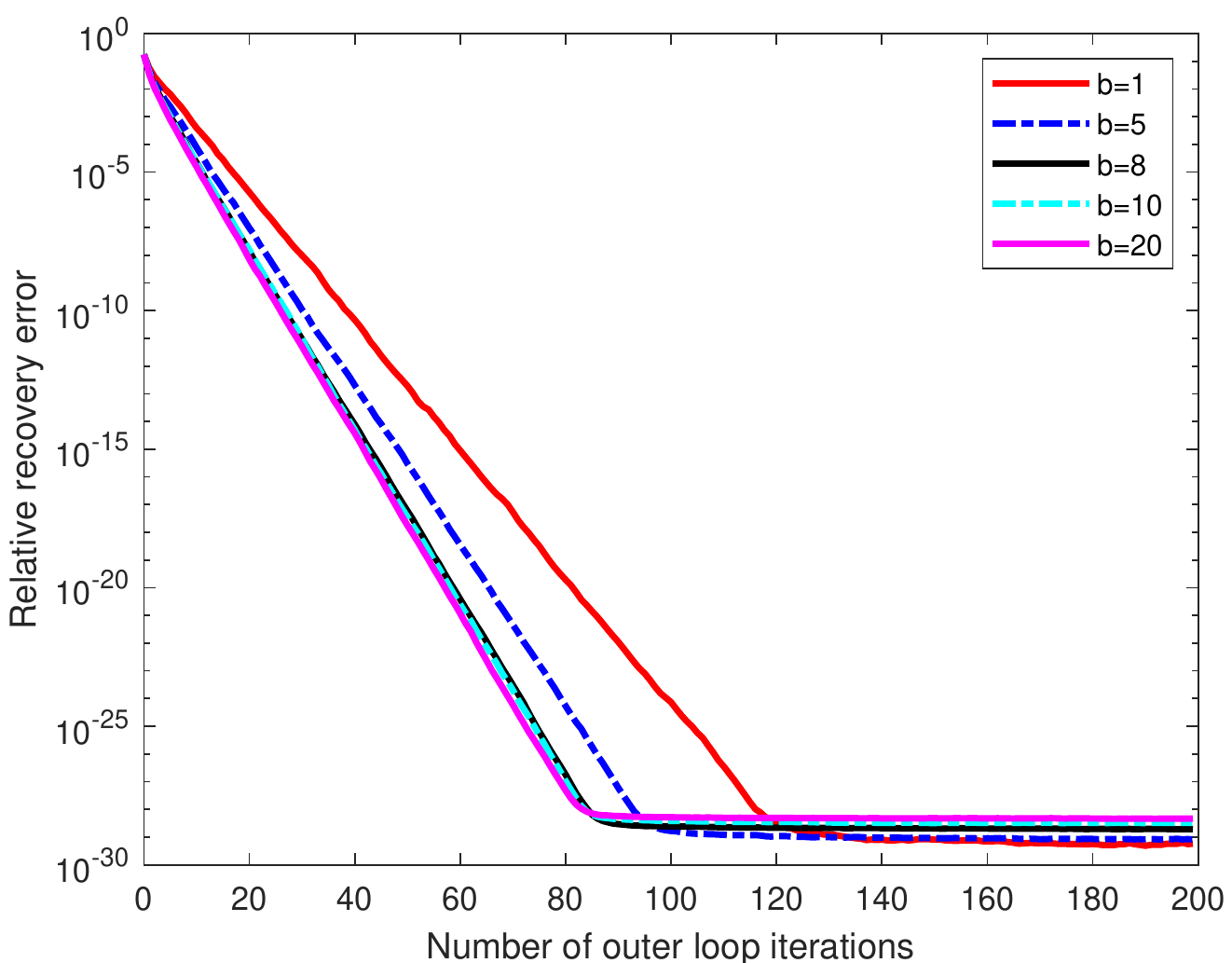}
\centerline{{\small (a) Convergence speed}}
\end{minipage}
\hfill
\begin{minipage}[b]{0.49\linewidth}
\centering
\includegraphics*[scale=.48]{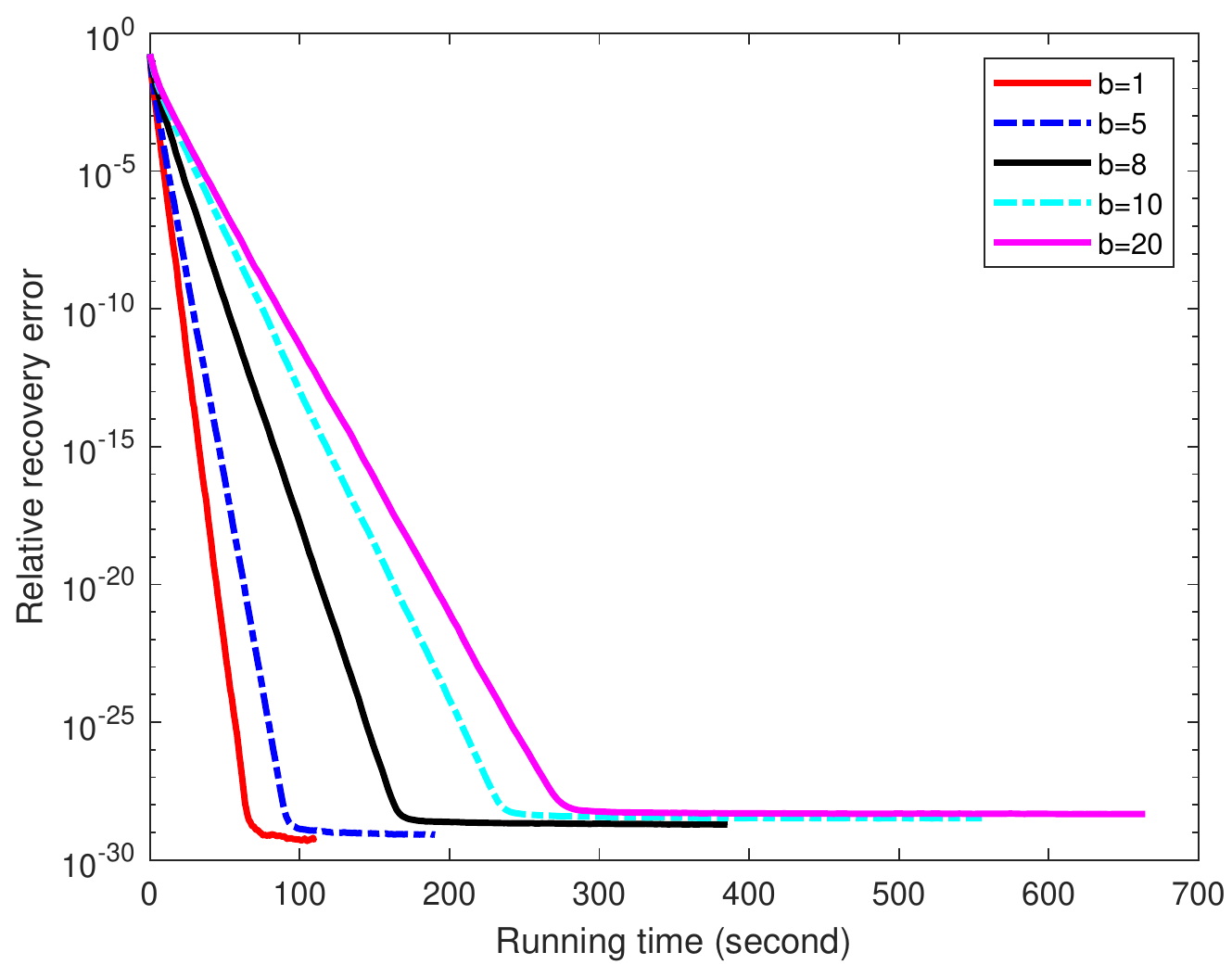}
\centerline{{\small (b) Computational complexity}}
\end{minipage}
\hfill
\caption{Effect of mini-batch size for SVRG-SDP. (a) Convergence speed of iterates yielded in the outer loop of SVRG-SDP, (b) Recovery error with respect to running time.
}
\label{Fig:mini-batch-size}
\end{figure}

\subsection{Effect of update frequency}
We conduct this experiment to study the effect of update frequency under the same settings described in Section \ref{sc:experimental-settings}. Specifically, we consider five choices of update frequency, i.e., $m\in \{\frac{n}{8},\frac{n}{4},\frac{n}{2},n,2n\}$, where $n$ is the sample size. The effects of the choice of update frequency to the convergence speed of iterates yielded in the outer loop of SVRG-SDP and computational complexity are presented in Figure \ref{Fig:update-frequency}. From Figure \ref{Fig:update-frequency}(a), a larger $m$ implies a faster convergence speed of the iterates yielded in the outer loop of SVRG-SDP, which coincides with the analysis in Remark \ref{remark:iteration-complexity}.
From Figure \ref{Fig:update-frequency}(b), the performance of SVRG-SDP with $m=n$ is the best among all these five choices in the sense that the computational complexity of SVRG-SDP with $m=n$ is the lowest to achieve a given recovery precision, where the performance of SVRG-SDP with $m=n/2$ is very close to that of SVRG-SDP with $m=n$. This also coincides with our analysis in Remark \ref{remark:iteration-complexity} and the empirical study for SVRG in the literature \cite{Ma2019}. Thus, in the later experiments, we fix $m=n$ in the implementation of SVRG-SDP.

\begin{figure}[!t]
\begin{minipage}[b]{0.49\linewidth}
\centering
\includegraphics*[scale=.48]{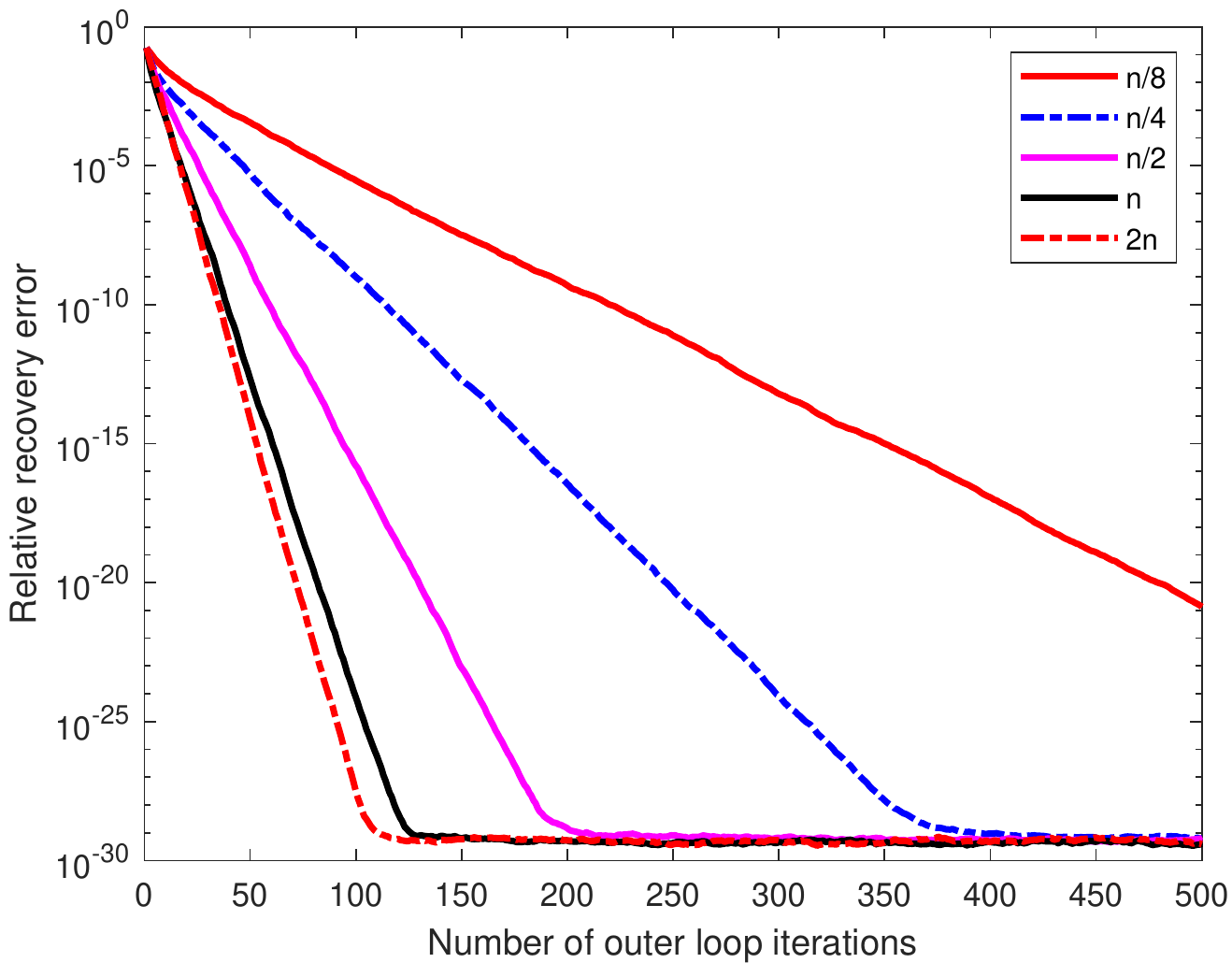}
\centerline{{\small (a) Convergence speed}}
\end{minipage}
\hfill
\begin{minipage}[b]{0.49\linewidth}
\centering
\includegraphics*[scale=.48]{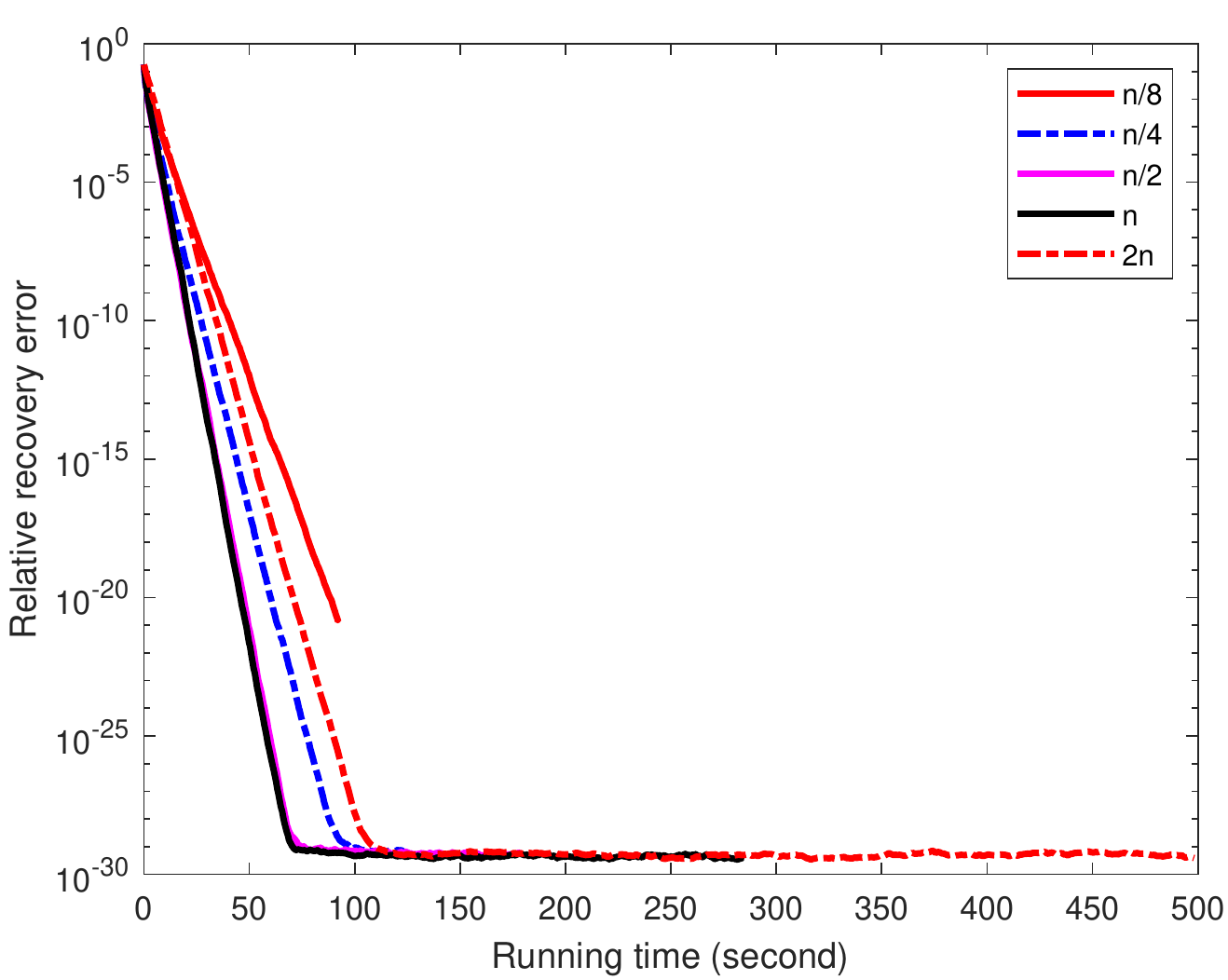}
\centerline{{\small (b) Computational complexity}}
\end{minipage}
\hfill
\caption{Effect of update frequency of the inner loop for SVRG-SDP. (a) Convergence speed of iterates yielded in the outer loop of SVRG-SDP, (b) Recovery error with respect to running time.
}
\label{Fig:update-frequency}
\end{figure}

\subsection{Performance of different step sizes}

We compare the performance of SVRG-SDP with two different types of step sizes, i.e., fixed and SBB step sizes under the same experimental settings as described in Section \ref{sc:experimental-settings}. Specifically, we consider a fixed step size $\eta= \frac{1}{4L\|X^*\|_F}$ and three SBB step sizes with $\epsilon = 0, 10^{-10}, 10^{-5}$, where SBB with $\epsilon=0$ is exactly the BB step size \citep{BB-stepsize1988}. For SVRG-SDP, we set $m=n$ and $b=1$ for all schemes of step sizes, motivated by the previous experiments. The experiment result is shown in Figure \ref{Fig:step-size}, where the epoch number in the horizontal axis is defined as the number of rounds that the total $n$ gradients are used. Since $m=n$ and $b=1$ in this experiment, an epoch of SVRG-SDP only includes one iteration of the outer loop. By Figure \ref{Fig:step-size}, the performance of SVRG-SDP equipped with the fixed step size is slightly better than concerned SVRG-SDP with both SBB and BB step sizes. When concerning the performance of SBB step size with a small $\epsilon$, the parameter $\epsilon$ has little effect on the performance of the proposed method.

\begin{figure}[!t]
\begin{minipage}[b]{0.99\linewidth}
\centering
\includegraphics*[scale=.6]{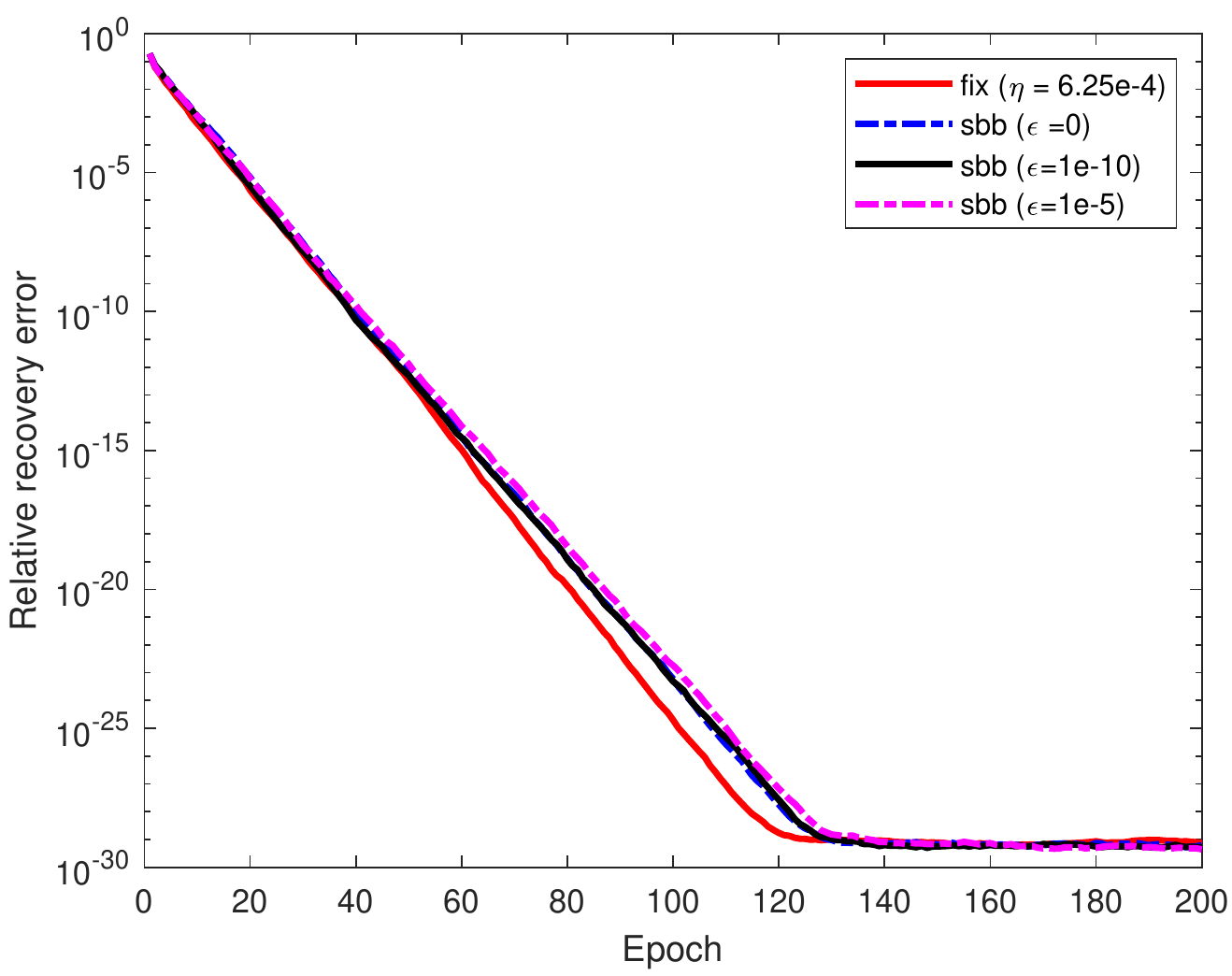}
\end{minipage}
\hfill
\caption{Performance of SVRG-SDP with different step sizes. 
}
\label{Fig:step-size}
\end{figure}

\subsection{Comparison with other algorithms}
\label{sc:comp-alg}

In this experiment, we demonstrate the effectiveness of SVRG-SDP via comparing to the other bath and stochastic methods including FGD \citep{Sanghavi2013}, SGD-fix \citep{Zeng-SGD2019}, SGD-diminish, SVRG-I \citep{Ma2018,Ma2019} and SVRG-LR \citep{Wang2017-universal-SVRG}. The experiment settings are presented in Section \ref{sc:experimental-settings}. The performance of these algorithms is shown in Figure \ref{Fig:comp-alg}. From Figure \ref{Fig:comp-alg}, all these SVRG type methods have the linear convergence and significantly outperform FGD and SGD-diminish. This justifies the established theoretical results for SVRG methods in the literature and this paper.
In particular, the proposed SVRG-SDP outperforms the existing batch and stochastic methods, where the performance of SVRG-I is very close or slightly inferior to that of SVRG-SDP. However, the convergence of SVRG-I is not known guaranteed. 
Moreover, when compared to SVRG-LR, the Option II counterpart of SVRG-SDP proposed in \cite{Wang2017-universal-SVRG}, our proposed SVRG-SDP works much better than SVRG-LR in terms of the convergence speed.
These show the advantage of Option I scheme and particularly the effectiveness of our proposed method.

\begin{figure}[!t]
\begin{minipage}[b]{0.99\linewidth}
\centering
\includegraphics*[scale=.6]{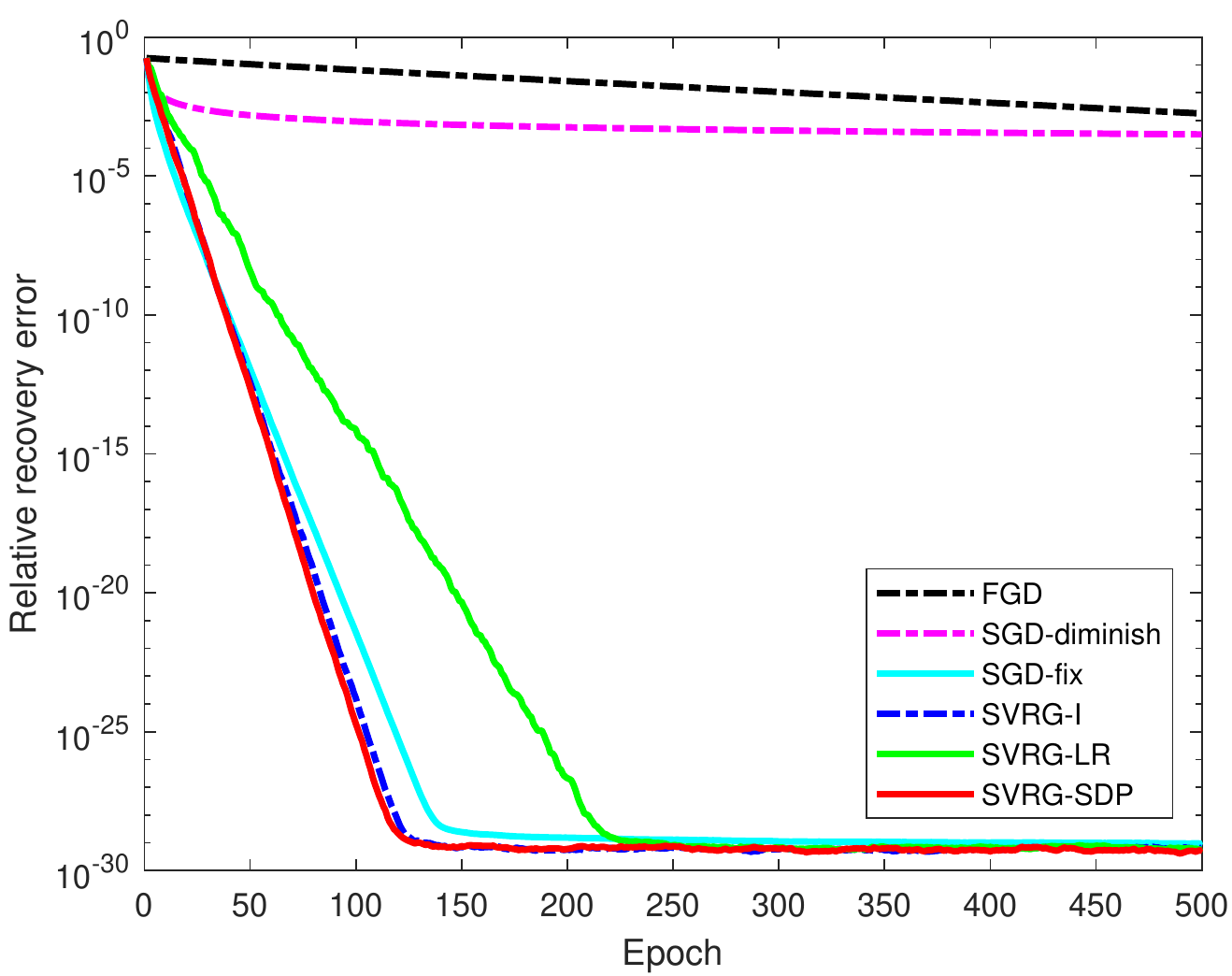}
\end{minipage}
\hfill
\caption{Comparison on the performance of different algorithms.
}
\label{Fig:comp-alg}
\end{figure}

\section{Conclusion}
\label{sc:conclusion}
In this paper, we propose an efficient SVRG method with Option I (i.e., the output of inner loop is chosen as the last iterate) called \textit{SVRG-SDP} for the stochastic semidefinite optimization problem involved in many applications, via exploiting a new semi-stochastic gradient in the update of inner loop.
The local linear submanifold convergence of the proposed version of SVRG method is established under the regular smoothness and RSC assumptions of the objective function at a proper initial choice. The provable radius of the initialization ball yielded in this paper is larger than those in the existing literature.
The local linear convergence of the proposed method can be boosted to the global linear convergence when adopting an efficient and provable initialization scheme.
Moreover, several step size schemes including the fixed, BB and SBB step sizes are included in the proposed method with provable guarantees.
To demonstrate the effectiveness of the proposed method, a series of experiments in matrix sensing are conducted to compare both batch and stochastic algorithms, where the effects of algorithmic parameters include the mini-batch size, update frequency and step size scheme studied theoretically and numerically.
The proposed SVRG method and its associated theoretical results established in this paper fill in an existing gap between the theory and practice of SVRG methods. 

\appendix
\section*{Appendix A. Preliminary Lemmas}

We provide some preliminary lemmas, which are frequently used in the later proofs.
All these lemmas are either directly taken or slightly adapted from the literature.

\begin{lemma}[Lemma 5.4 in \cite{Recht-2016}]
\label{Lemm:dist-twomat}
For any $U,V \in \mathbb{R}^{p\times r},$  then we have
\[
\|UU^T - VV^T\|_F^2 \geq 2(\sqrt{2}-1)\sigma_r^2(V)\cdot {\cal E}(U,V).
\]
\end{lemma}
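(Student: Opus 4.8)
\textbf{Proof proposal for Lemma~\ref{Lemm:dist-twomat}.}

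The plan is to reduce the claimed bound to the optimal-rotation alignment between $U$ and $V$ and then invoke a known Procrustes-type inequality. First I would fix the minimizing rotation: let $R^\ast = \arg\min_{R\in{\cal O}}\|U - VR\|_F^2$, so that ${\cal E}(U,V) = \|U - VR^\ast\|_F^2$. Writing $W := VR^\ast$, we have $WW^T = VR^\ast (R^\ast)^T V^T = VV^T$ since $R^\ast$ is orthogonal, and likewise $\sigma_r(W) = \sigma_r(V)$. Hence the inequality to be proved is equivalent to
\[
\|UU^T - WW^T\|_F^2 \;\geq\; 2(\sqrt{2}-1)\,\sigma_r^2(W)\cdot \|U - W\|_F^2,
\]
where, crucially, $W = VR^\ast$ is the rotation of $V$ that is closest to $U$ in Frobenius norm. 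This reduction converts the orbit metric on the left of the statement into a plain Frobenius distance between two ``already aligned'' factors, which is the form in which such bounds are classically stated.

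Next I would exploit the optimality of $R^\ast$. The first-order stationarity condition for $\min_{R\in{\cal O}}\|U-VR\|_F^2$ (equivalently $\max_{R\in{\cal O}}\langle U, VR\rangle$) forces $(VR^\ast)^T U = U^T (VR^\ast)$, i.e. $W^T U$ is symmetric; this is the standard orthogonal-Procrustes optimality characterization. With $\Delta := U - W$ one expands
\[
\|UU^T - WW^T\|_F^2 = \|\Delta W^T + W\Delta^T + \Delta\Delta^T\|_F^2,
\]
and the symmetry of $W^T U$ (equivalently of $W^T\Delta$) is exactly what makes the cross terms combine favorably rather than cancel. Expanding the square and grouping the symmetric-product terms gives, after collecting,
\[
\|UU^T - WW^T\|_F^2 = 2\|\Delta W^T + W\Delta^T\|_F^2 /2 + \cdots
\]
— I would carry out this expansion carefully and bound the lower-order terms; the core estimate that emerges is $\|\Delta W^T + W\Delta^T\|_F^2 \geq 2\sigma_r^2(W)\|\Delta\|_F^2$ together with a nonnegative contribution from $\|\Delta\Delta^T\|_F^2$ and the residual cross terms, and the constant $2(\sqrt{2}-1)$ is precisely what survives after the worst-case balancing of the quadratic-in-$\Delta$ remainder against the leading term. (This is the computation carried out in \cite{Recht-2016}, whose statement we are merely quoting; I would either reproduce it or cite it verbatim.)

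The main obstacle is the bookkeeping in the expansion: one must show that the residual terms involving $\Delta\Delta^T$ and the non-sign-definite cross terms do not destroy the lower bound, and the constant $2(\sqrt{2}-1)$ is not the ``naive'' $2$ precisely because of this interaction — so the sharp step is identifying the exact algebraic identity (rather than a crude triangle-inequality bound) that lets $\sigma_r^2(W)$ factor out with the stated constant. Since the lemma is labeled as Lemma~5.4 of \cite{Recht-2016}, the cleanest route in this paper is simply to invoke it as an external result; if a self-contained argument is wanted, the route above via the Procrustes optimality condition $W^TU = U^TW$ is the one I would follow, with the symmetry of $W^T U$ being the single ingredient that makes everything go through.
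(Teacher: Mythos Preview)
The paper does not prove this lemma at all: it is stated as a preliminary result taken verbatim from \cite{Recht-2016} (Lemma~5.4 there) and invoked without argument, exactly as you suggest at the end of your proposal. Your sketched derivation --- aligning via the Procrustes minimizer $R^\ast$, using the symmetry of $W^TU$ from the first-order optimality condition, and expanding $\|UU^T - WW^T\|_F^2$ in $\Delta = U-W$ --- is precisely the route taken in the cited source, so you are both matching the paper's (trivial) ``proof by citation'' and correctly outlining the underlying argument.
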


\begin{lemma}[Lemma 3 in \cite{Zeng-SGD2019}]
\label{Lemm:sigma_min_X}
For any $U\in \mathbb{R}^{p\times r}$, let $X = UU^T$. If ${\cal E}(U,U_r^*) \leq \gamma \sigma_r(X^*)$ for some constant $0<\gamma <1$, then
\[
\sigma_r(X) \geq (1-\sqrt{\gamma})^2 \sigma_r(X^*).
\]
\end{lemma}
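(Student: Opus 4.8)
The plan is to reduce the claim to a one-line perturbation estimate for the smallest singular value of the factor matrices. Write $X = UU^T$ and $X^* = U_r^*(U_r^*)^T$. Since a matrix of the form $AA^T$ is symmetric positive semidefinite with nonzero eigenvalues equal to the squares of the nonzero singular values of $A$, it is enough to prove
\[
\sigma_r(U) \ge (1-\sqrt{\gamma})\,\sigma_r(U_r^*);
\]
squaring (legitimate because $\gamma<1$ forces $1-\sqrt{\gamma}>0$) then yields $\sigma_r(X)=\sigma_r(U)^2 \ge (1-\sqrt{\gamma})^2\sigma_r(U_r^*)^2 = (1-\sqrt{\gamma})^2\sigma_r(X^*)$. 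Note the displayed inequality also certifies $\sigma_r(U)>0$, hence $\mathrm{rank}(U)=r$, so $\sigma_r(U)$ is unambiguously the smallest strictly positive singular value of $U$ in the paper's convention, and the identity $\sigma_r(X)=\sigma_r(U)^2$ is valid.

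First I would use the orthogonal invariance hard-wired into ${\cal E}$. Let $R \in {\cal O}$ attain the minimum in ${\cal E}(U,U_r^*)=\min_{R\in{\cal O}}\|U-U_r^*R\|_F^2$ and put $W := U_r^*R$. Right multiplication by an orthogonal matrix leaves singular values unchanged, so $\sigma_r(W)=\sigma_r(U_r^*)$, and by hypothesis
\[
\|U-W\|_F^2 = {\cal E}(U,U_r^*) \le \gamma\,\sigma_r(X^*) = \gamma\,\sigma_r(U_r^*)^2,
\]
that is, $\|U-W\|_F \le \sqrt{\gamma}\,\sigma_r(U_r^*)$.

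Then I would invoke Weyl's perturbation inequality for singular values, $|\sigma_r(A)-\sigma_r(B)| \le \|A-B\|_2 \le \|A-B\|_F$, with $A=U$ and $B=W$, to obtain
\[
\sigma_r(U) \ge \sigma_r(W) - \|U-W\|_F \ge \sigma_r(U_r^*) - \sqrt{\gamma}\,\sigma_r(U_r^*) = (1-\sqrt{\gamma})\,\sigma_r(U_r^*),
\]
which is exactly the inequality isolated in the first paragraph; squaring finishes the proof.

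I do not anticipate a genuine obstacle: the argument is short and uses only orthogonal invariance of the metric, the eigenvalue--singular-value correspondence for Gram products, and Weyl's inequality. The only points deserving care are the sign bookkeeping in the squaring step (covered by $\gamma<1$) and reconciling the use of $\sigma_r$ with the paper's convention that $\sigma_{\min}$ is the smallest \emph{strictly positive} singular value — a non-issue once the bound above shows $U$ has full column rank $r$.
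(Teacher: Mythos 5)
Your proof is correct: the reduction $\sigma_r(X)=\sigma_r(U)^2$, the orthogonal-invariance step picking the optimal $R$, and Weyl's singular-value perturbation bound together give exactly the claimed inequality, and you correctly handle the sign/rank bookkeeping before squaring. Note that the paper itself offers no proof of this lemma---it is imported verbatim as Lemma 3 of \cite{Zeng-SGD2019}---and your Weyl-based argument on the factor matrices is essentially the standard proof given in that reference, so there is nothing to flag.
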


\begin{lemma}[Lemma 4 in \cite{Zeng-SGD2019}]
\label{Lemm:X-Xr*}
For any $U\in \mathbb{R}^{p\times r}$, let $X = UU^T$. If ${\cal E}(U,U_r^*) \leq \gamma \sigma_r(X^*)$ for some constant $0<\gamma<1$, then
\begin{align*}
\|X-X^*\|_F
\leq (2+\sqrt{\gamma})\|U_r^*\|_2 {\cal E}^{1/2}(U,U_r^*)
\leq (2\sqrt{\gamma}+\gamma)\cdot \tau(U_r^*)\cdot \sigma_r(X^*),
\end{align*}
where $\tau(U_r^*):=\frac{\sigma_1(U_r^*)}{\sigma_r(U_r^*)}$.
\end{lemma}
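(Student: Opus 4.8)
The plan is to reduce everything to an optimally aligned factorization of $X^*$ and then apply only the triangle inequality together with submultiplicativity of the norms. Let $R\in{\cal O}$ attain the minimum in ${\cal E}(U,U_r^*)=\min_{R\in{\cal O}}\|U-U_r^*R\|_F^2$, and set $V:=U_r^*R$. Since $R$ is orthogonal, $VV^T=U_r^*(U_r^*)^T=X^*$, so replacing $U_r^*$ by $V$ leaves $X^*$ untouched while $\|U-V\|_F^2={\cal E}(U,U_r^*)$, $\|V\|_2=\|U_r^*\|_2$, and $\sigma_r(V)=\sigma_r(U_r^*)$. First I would write the elementary identity
\[
X-X^* = UU^T-VV^T = (U-V)U^T + V(U-V)^T,
\]
and bound each term, using $\|AB\|_F\le\|A\|_F\|B\|_2$ and $\|AB\|_F\le\|A\|_2\|B\|_F$, to obtain
\[
\|X-X^*\|_F \le \|U-V\|_F\,\bigl(\|U\|_2+\|V\|_2\bigr).
\]

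Next I would control $\|U\|_2$. By the triangle inequality for the spectral norm, $\|U\|_2\le\|U-V\|_2+\|V\|_2\le\|U-V\|_F+\|U_r^*\|_2$. To bound $\|U-V\|_F$, I note $\|U-V\|_F={\cal E}^{1/2}(U,U_r^*)\le\sqrt{\gamma\,\sigma_r(X^*)}$ by hypothesis; since $X^*=U_r^*(U_r^*)^T$ has rank $r$, one has $\sigma_r(X^*)=\sigma_r^2(U_r^*)$, hence $\|U-V\|_F\le\sqrt{\gamma}\,\sigma_r(U_r^*)\le\sqrt{\gamma}\,\|U_r^*\|_2$. Substituting gives $\|U\|_2\le(1+\sqrt{\gamma})\|U_r^*\|_2$, so $\|U\|_2+\|V\|_2\le(2+\sqrt{\gamma})\|U_r^*\|_2$, and combining with the displayed bound yields the first claimed inequality $\|X-X^*\|_F\le(2+\sqrt{\gamma})\|U_r^*\|_2\,{\cal E}^{1/2}(U,U_r^*)$.

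For the second inequality I would substitute ${\cal E}^{1/2}(U,U_r^*)\le\sqrt{\gamma}\,\sigma_r(U_r^*)$ once more, obtaining
\[
\|X-X^*\|_F \le \sqrt{\gamma}\,(2+\sqrt{\gamma})\,\sigma_1(U_r^*)\sigma_r(U_r^*) = (2\sqrt{\gamma}+\gamma)\,\tau(U_r^*)\,\sigma_r^2(U_r^*) = (2\sqrt{\gamma}+\gamma)\,\tau(U_r^*)\,\sigma_r(X^*),
\]
using $\tau(U_r^*)=\sigma_1(U_r^*)/\sigma_r(U_r^*)$ and $\sigma_r(X^*)=\sigma_r^2(U_r^*)$. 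There is no genuine obstacle here: the only points requiring a little care are the invariance of $\|\cdot\|_2$ and $\sigma_r(\cdot)$ of $U_r^*$ under the orthogonal alignment $R$, and the identity $\sigma_r\!\left(U_r^*(U_r^*)^T\right)=\sigma_r^2(U_r^*)$ for the rank-$r$ matrix $U_r^*$; everything else is the triangle inequality and submultiplicativity of the Frobenius and spectral norms.
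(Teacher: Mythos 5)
Your proof is correct: the alignment $V=U_r^*R$, the decomposition $UU^T-VV^T=(U-V)U^T+V(U-V)^T$, the bound $\|U\|_2\le\|U-V\|_F+\|U_r^*\|_2\le(1+\sqrt{\gamma})\|U_r^*\|_2$, and the identity $\sigma_r(X^*)=\sigma_r^2(U_r^*)$ all check out and yield exactly the two stated inequalities. The paper itself does not prove this lemma (it imports it as Lemma 4 of the cited SGD paper), and your argument is essentially the standard one used there, so there is nothing to add.
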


For any matrix $U \in \mathbb{R}^{p\times r}$, let $Q_U$ be a basis of the column space of $U$. Denote ${\cal P}_U := Q_UQ_U^T$. Then ${\cal P}_U  U = U$. For any matrix $Y \in \mathbb{R}^{p\times p}$, ${\cal P}_UY$ is a projection of $Y$ onto the same subspace spanned by $X:= UU^T$.
The following lemma is a special case of \cite[Lemma 5]{Zeng-SGD2019} with the exact rank $r$ of the global optimum $X^*$ (implying $\nabla f(X^*)=0$) and $\bar{\eta}=1$ implied by $1<\kappa \leq 64(\sqrt{2}-1)$ (where the proof is shown in \cite[Appendix E]{Zeng-SGD2019}).

\begin{lemma}
\label{Lemm:feasibility}
Let Assumption \ref{Assump:objfun} hold with $1<\kappa\leq 64(\sqrt{2}-1)$. For any $U \in \mathbb{R}^{p\times r}$, let $X = UU^T$. If ${\cal E}(U,U_r^*) < \gamma_0 \sigma_r(X_r^*)$,
where $\gamma_0 = (\sqrt{2}-1)\kappa^{-1}$,
then the following hold:
\begin{enumerate}
\item[(a)] $\|\nabla f(X)\|_F \leq  (2\sqrt{\gamma_0}+\gamma_0)L\tau(U_r^*)\sigma_r(X^*),$

\item[(b)] $\bar{X}:=X - \frac{1}{L}  {\cal P}_U\nabla f(X){\cal P}_U$ is symmetric and positive semidefinite with rank $r$,

\item[(c)] $ ({\bf I}-{\cal P}_U)X^* =0.$
\end{enumerate}
\end{lemma}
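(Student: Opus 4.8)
The statement is the exact-rank specialization of \cite[Lemma 5]{Zeng-SGD2019}, so the plan is to reproduce that argument with two simplifications afforded by the present hypotheses: since $X^*$ has rank exactly $r$ we may use $\nabla f(X^*)=0$, and since $1<\kappa\le 64(\sqrt{2}-1)$ the auxiliary step-size parameter of that lemma may be taken to be $\bar\eta=1$. I would then establish the three claims in the order (a), (c), (b).

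Part (a) is immediate. Assumption \ref{Assump:objfun}(a) transfers the $L$-Lipschitz continuity of $\nabla f_i$ to $\nabla f$, so, using $\nabla f(X^*)=0$,
\[
\|\nabla f(X)\|_F=\|\nabla f(X)-\nabla f(X^*)\|_F\le L\,\|X-X^*\|_F.
\]
Since ${\cal E}(U,U_r^*)<\gamma_0\sigma_r(X^*)$ with $\gamma_0=(\sqrt{2}-1)\kappa^{-1}<1$, Lemma \ref{Lemm:X-Xr*} applies with $\gamma=\gamma_0$ and gives $\|X-X^*\|_F\le(2\sqrt{\gamma_0}+\gamma_0)\,\tau(U_r^*)\,\sigma_r(X^*)$; substituting into the display above yields (a).

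For part (c) I would begin from ${\cal P}_U X={\cal P}_U UU^T=UU^T=X$, so that $({\bf I}-{\cal P}_U)X^*=({\bf I}-{\cal P}_U)(X^*-X)$, and then argue that this residual vanishes by exploiting the rank-$r$ structure of $U$ as in \cite{Zeng-SGD2019}, i.e.\ that the range of $X^*$ is contained in the range of $U$. Granting (c), part (b) is the substantive one. Using ${\cal P}_U X{\cal P}_U=X$, one rewrites $\bar X={\cal P}_U\!\left(X-\frac{1}{L}\nabla f(X)\right)\!{\cal P}_U$, which is visibly symmetric of rank at most $r$, so it remains only to show that the $r\times r$ compression $M:=Q_U^T\!\left(X-\frac{1}{L}\nabla f(X)\right)\!Q_U$ is positive definite. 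Here $\lambda_{\min}\!\left(Q_U^T X Q_U\right)=\sigma_r(X)\ge(1-\sqrt{\gamma_0})^2\sigma_r(X^*)$ by Lemma \ref{Lemm:sigma_min_X}, whereas $\frac{1}{L}\|Q_U^T\nabla f(X)Q_U\|_2\le\frac{1}{L}\|\nabla f(X)\|_F$ is controlled by part (a); the admissibility of $\bar\eta=1$, i.e.\ $\kappa\le 64(\sqrt{2}-1)$, is exactly what forces the first term to dominate, so that $M\succ0$ and $\bar X$ is positive semidefinite of rank exactly $r$.

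The step I expect to demand the most care is this last balance in (b): the bound from (a) carries a factor $\tau(U_r^*)$, so keeping $\frac{1}{L}\|\nabla f(X)\|$ strictly below the floor $(1-\sqrt{\gamma_0})^2\sigma_r(X^*)$ needs a careful accounting of the numerical constants ($\gamma_0=(\sqrt{2}-1)\kappa^{-1}$ weighed against $(1-\sqrt{\gamma_0})^2$), and the perturbation has to be carried out inside the $r$-dimensional range of $U$ rather than over all of $\mathbb{S}^p$. Establishing (c) as an exact identity, rather than only an approximate one, is the other subtle ingredient, and for it I would follow the construction-specific argument of \cite[Lemma 5]{Zeng-SGD2019} (whose proof is given in \cite[Appendix E]{Zeng-SGD2019}) step by step.
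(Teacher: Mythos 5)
Your part (a) is fine and is surely the intended route (Lipschitzness of $\nabla f$, $\nabla f(X^*)=0$, then Lemma \ref{Lemm:X-Xr*}), but note that the paper itself never proves this lemma: it is imported wholesale as the exact-rank, $\bar\eta=1$ specialization of \cite[Lemma 5]{Zeng-SGD2019}, with the proof living in Appendix E of that reference. So the real question is whether your reconstruction of (b) and (c) stands on its own, and there it has genuine gaps. For (b), the balance you rely on does not close with the estimates you cite. You need $\frac{1}{L}\|Q_U^T\nabla f(X)Q_U\|_2$ to fall below the floor $\sigma_r(X)\ge(1-\sqrt{\gamma_0})^2\sigma_r(X^*)$ from Lemma \ref{Lemm:sigma_min_X}, but the only bound you invoke is part (a), which carries the factor $\tau(U_r^*)=\sqrt{\tau(X^*)}$; the hypothesis $1<\kappa\le 64(\sqrt{2}-1)$ constrains the condition number of $f$, not of $X^*$, so $\tau(U_r^*)$ is unbounded under the lemma's assumptions and no ``careful accounting of numerical constants'' can rescue the comparison. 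Worse, the numbers fail even in the perfectly conditioned case: as $\kappa\to 1$ one has $\gamma_0\to\sqrt{2}-1$, giving a floor $(1-\sqrt{\gamma_0})^2\sigma_r(X^*)\approx 0.13\,\sigma_r(X^*)$ against an upper bound $(2\sqrt{\gamma_0}+\gamma_0)\sigma_r(X^*)\approx 1.70\,\sigma_r(X^*)$. So ``$\kappa\le 64(\sqrt{2}-1)$ forces the first term to dominate'' is not true for the crude bound $\|Q_U^T\nabla f(X)Q_U\|_2\le\|\nabla f(X)\|_F$; whatever argument \cite{Zeng-SGD2019} uses must exploit finer structure than the raw gradient norm, and your sketch does not supply it.

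For (c) you have not given an argument at all: the assertion that ``the range of $X^*$ is contained in the range of $U$'' is literally a restatement of $({\bf I}-{\cal P}_U)X^*=0$, and this is the delicate claim of the lemma. It does not follow from ${\cal E}(U,U_r^*)$ being small by any continuity or perturbation reasoning — closeness of $U$ to $U_r^*R$ in Frobenius norm does not make their column spaces coincide (take $r=1$, $U_r^*=e_1$, $U=(1,\epsilon)^T$: ${\cal E}(U,U_r^*)=\epsilon^2$ is arbitrarily small yet $({\bf I}-{\cal P}_U)X^*\neq 0$), so whatever makes (c) valid must come from the construction-specific argument in \cite[Appendix E]{Zeng-SGD2019} that you defer to. Deferring both the quantitative heart of (b) and all of (c) to the external reference means the proposal, taken as a standalone proof, establishes only part (a).
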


\section*{Appendix B. Some Key Lemmas}
\label{app:proof-of-key-lemma}

Here, we develop two lemmas to bound these two terms, i.e., $2{\mathbb E}_{i_t}[\langle v_k^tU^t, U^t - V_{U^t}^*\rangle]$ and ${\mathbb E}_{i_t}[\|v_k^tU^t\|_F^2]$ in (\ref{Eq:keyineq1}), respectively.

\subsection*{Appendix B.1. Bound $2{\mathbb E}_{i_t}[\langle v_k^tU^t, U^t - V_{U^t}^*\rangle]$}
\label{app:first-term}

\begin{lemma}
\label{Lemm:bound-innerproduct}
Let Assumption \ref{Assump:objfun} hold with $1<\kappa \leq 64(\sqrt{2}-1)$. For any $U\in \mathbb{R}^{p\times r}$, let $X = UU^T$, $Q_U$ be a basis of the column space of $U$, and ${\cal P}_U := Q_UQ_U^T$. If ${\cal E}(U^t,U_r^*) < \gamma_0 \sigma_r(X^*)$ with $\gamma_0 = (\sqrt{2}-1)\kappa^{-1}$,
there holds
\begin{align}
&2{\mathbb E}_{{\cal I}_t} [\langle v_k^t U^t, U^t-V_{U^t}^* \rangle] \label{Eq:T1-lowerbound}\\
&\geq (\sqrt{2}-1)\mu \sigma_r(X^*) {\cal E}(U^t,U_r^*)
- L{\cal E}^2(U^t,U_r^*)  + \frac{1}{4L} \|{\cal P}_{U^t}\nabla f(X^t)\|_F^2.   \nonumber
\end{align}
\end{lemma}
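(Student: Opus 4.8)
\textbf{Proof proposal for Lemma \ref{Lemm:bound-innerproduct}.}

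The plan is to decompose the semi-stochastic direction $v_k^t$ into its ``true gradient'' part and a ``variance'' part, handle the two parts separately, and then collect the terms so that the RSC inequality supplies the leading $(\sqrt{2}-1)\mu\sigma_r(X^*){\cal E}(U^t,U_r^*)$ contribution while the curvature produces the nonnegative $\frac{1}{4L}\|{\cal P}_{U^t}\nabla f(X^t)\|_F^2$ term, at the cost of the quadratic remainder $-L{\cal E}^2(U^t,U_r^*)$. Concretely, first I would note that since ${\cal I}_t$ is sampled uniformly, ${\mathbb E}_{{\cal I}_t}[v_k^t] = \nabla f(X^t)$, so the inner product in expectation is exactly $2\langle \nabla f(X^t)U^t, U^t - V_{U^t}^*\rangle$; the randomness of $v_k^t$ does not enter this particular term at all (it will only matter for the variance bound in Lemma \ref{Lemm:bound-variance}). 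So the lemma reduces to a deterministic estimate of $2\langle \nabla f(X^t)U^t, U^t - V_{U^t}^*\rangle$.

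Next I would exploit the structure $\nabla f(X^t)U^t = {\cal P}_{U^t}\nabla f(X^t)U^t$ (because ${\cal P}_{U^t}U^t = U^t$), and rewrite $\langle \nabla f(X^t)U^t, U^t - V_{U^t}^*\rangle$ in the ambient matrix space. Using $U^t(U^t)^T = X^t$, $V_{U^t}^*(V_{U^t}^*)^T = X^*$, and the key identity $2\langle \nabla f(X^t)U^t, U^t - V_{U^t}^*\rangle = \langle \nabla f(X^t), X^t - X^*\rangle + \langle \nabla f(X^t), (U^t - V_{U^t}^*)(U^t - V_{U^t}^*)^T\rangle$, I would bound the first ambient term below by the RSC inequality applied to the pair $X^t, X^*$: since $\nabla f(X^*)=0$ (exact rank $r$), $\langle \nabla f(X^t), X^t - X^*\rangle \ge \mu\|X^t - X^*\|_F^2$, and then invoke Lemma \ref{Lemm:dist-twomat} to get $\|X^t-X^*\|_F^2 \ge 2(\sqrt{2}-1)\sigma_r^2(U_r^*){\cal E}(U^t,U_r^*) = 2(\sqrt{2}-1)\sigma_r(X^*){\cal E}(U^t,U_r^*)$, which after splitting off a fraction of this gives both the $(\sqrt{2}-1)\mu\sigma_r(X^*){\cal E}(U^t,U_r^*)$ term and (via $\mu\|X^t-X^*\|_F^2 \ge \frac{1}{L}\cdot\text{(something)}$ together with Lemma \ref{Lemm:feasibility}) a handle on $\frac{1}{4L}\|{\cal P}_{U^t}\nabla f(X^t)\|_F^2$. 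For the second ambient term, which can be negative, I would bound $|\langle \nabla f(X^t), (U^t-V_{U^t}^*)(U^t-V_{U^t}^*)^T\rangle| \le \|\nabla f(X^t)\|_2\,\|U^t - V_{U^t}^*\|_F^2 \le L\|X^t-X^*\|_2\cdot{\cal E}(U^t,U_r^*)$ and then use $\|X^t - X^*\|_2 \le \|X^t-X^*\|_F \le (2+\sqrt{\gamma_0})\|U_r^*\|_2{\cal E}^{1/2}(U^t,U_r^*)$ from Lemma \ref{Lemm:X-Xr*} (absorbing constants, and using $\kappa\le 64(\sqrt 2 -1)$ so that $\gamma_0$ is controlled) to make this $\le L{\cal E}^2(U^t,U_r^*)$-type quadratic error.

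The main obstacle I anticipate is the bookkeeping that simultaneously extracts \emph{both} a clean linear term with the sharp constant $(\sqrt{2}-1)\mu\sigma_r(X^*)$ \emph{and} the full $\frac{1}{4L}\|{\cal P}_{U^t}\nabla f(X^t)\|_F^2$ from the single RSC inequality: one must partition $\mu\|X^t-X^*\|_F^2$ carefully, on one side keeping enough to dominate $\|{\cal P}_{U^t}\nabla f(X^t)\|_F^2$ (this is where Lemma \ref{Lemm:feasibility}, giving $\bar X = X^t - \frac{1}{L}{\cal P}_{U^t}\nabla f(X^t){\cal P}_{U^t}\succeq 0$ of rank $r$, is used to relate $\|{\cal P}_{U^t}\nabla f(X^t)\|_F$ back to $\|X^t-X^*\|_F$ via a descent/expansion argument), and on the other side keeping the $(\sqrt 2 -1)\mu\sigma_r(X^*){\cal E}$ piece, while the quadratic cross terms get pushed into the $-L{\cal E}^2$ slack. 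Getting the numerical constants to line up exactly as stated — rather than with a worse constant — will require the hypothesis $1<\kappa\le 64(\sqrt 2 -1)$ to be used at the right place, and that is the delicate step; the rest is routine Cauchy--Schwarz and norm comparisons.
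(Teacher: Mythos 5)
Your skeleton matches the paper up to a point: the reduction ${\mathbb E}_{{\cal I}_t}[v_k^t]=\nabla f(X^t)$ and the decomposition into $\langle \nabla f(X^t), X^t-X^*\rangle + \langle \nabla f(X^t), (U^t-V_{U^t}^*)(U^t-V_{U^t}^*)^T\rangle$ are exactly what the paper does. The genuine gap is in how you propose to produce the $\frac{1}{4L}\|{\cal P}_{U^t}\nabla f(X^t)\|_F^2$ term. You take gradient monotonicity, $\langle \nabla f(X^t), X^t-X^*\rangle \ge \mu\|X^t-X^*\|_F^2$, and then want to carve the gradient-norm term out of $\mu\|X^t-X^*\|_F^2$ via Lipschitzness and Lemma \ref{Lemm:feasibility}. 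But the only inequality available in that direction is $\|{\cal P}_{U^t}\nabla f(X^t)\|_F \le \|\nabla f(X^t)-\nabla f(X^*)\|_F \le L\|X^t-X^*\|_F$, so a fraction $\alpha\mu\|X^t-X^*\|_F^2$ yields at most $\frac{\alpha}{\kappa L}\|{\cal P}_{U^t}\nabla f(X^t)\|_F^2$; since you must keep $\alpha\le \frac12$ to retain the full $(\sqrt2-1)\mu\sigma_r(X^*){\cal E}(U^t,U_r^*)$ via Lemma \ref{Lemm:dist-twomat}, you end with coefficient at most $\frac{1}{2\kappa L}$, which is below $\frac{1}{4L}$ (before even paying any Young penalty) for every $\kappa>1$; the hypothesis $\kappa\le 64(\sqrt2-1)$ does not rescue this. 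The paper extracts the gradient-norm term from the \emph{function-value gap}, not from $\|X^t-X^*\|_F^2$: smoothness along the feasible point $\bar X^t = X^t-\frac1L{\cal P}_{U^t}\nabla f(X^t){\cal P}_{U^t}$ (Lemma \ref{Lemm:feasibility}(b)) together with optimality of $X^*$ gives $f(X^t)-f(X^*)\ge \frac{1}{2L}\|{\cal P}_{U^t}\nabla f(X^t)\|_F^2$, while the RSC inequality in its function-value form gives $\langle \nabla f(X^t), X^t-X^*\rangle \ge f(X^t)-f(X^*)+\frac{\mu}{2}\|X^t-X^*\|_F^2$; adding the two cancels the function values and delivers both $\frac{\mu}{2}\|X^t-X^*\|_F^2$ and $\frac{1}{2L}\|{\cal P}_{U^t}\nabla f(X^t)\|_F^2$ with the stated constants. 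That device is missing from your sketch and cannot be replaced by the monotonicity-plus-Lipschitz route.

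Your treatment of the second term also fails quantitatively. Cauchy--Schwarz with $\|\nabla f(X^t)\|_2\le L\|X^t-X^*\|_F$ and Lemma \ref{Lemm:X-Xr*} gives a remainder of order $L(2+\sqrt{\gamma_0})\|U_r^*\|_2\,{\cal E}^{3/2}(U^t,U_r^*)$, which is \emph{not} dominated by $L{\cal E}^2(U^t,U_r^*)$: since ${\cal E}(U^t,U_r^*)<\gamma_0\sigma_r(X^*)\le \|U_r^*\|_2^2$, one has ${\cal E}^2 \le \|U_r^*\|_2{\cal E}^{3/2}$, so the absorption you hope for goes the wrong way, and viewed as a linear-in-${\cal E}$ bound its coefficient is of order $L\sqrt{\gamma_0}\,\tau(U_r^*)\sigma_r(X^*)$, which exceeds the positive linear coefficient $L\gamma_0\sigma_r(X^*)$ and would destroy the contraction downstream. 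The paper instead first replaces $\nabla f(X^t)$ by ${\cal P}_{U^t}\nabla f(X^t)$ in this term (using $({\bf I}-{\cal P}_{U^t})X^*=0$, Lemma \ref{Lemm:feasibility}(c)) and applies Young's inequality with $\delta=\frac{1}{2L}$, obtaining $\ge -\frac{1}{4L}\|{\cal P}_{U^t}\nabla f(X^t)\|_F^2 - L{\cal E}^2(U^t,U_r^*)$, i.e.\ it spends only half of the gradient credit earned in the first step and keeps a genuinely quadratic remainder free of $\|U_r^*\|_2$. Both of your key estimates therefore need to be redone along these lines.
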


\begin{proof}
Note that
\begin{align}
&2 {\mathbb E}_{{\cal I}_t}[\langle v_k^t U^t, U^t-V_{U^t}^* \rangle]
= 2 \langle \nabla f(X^t), X^t-V_{U^t}^* (U^t)^T \rangle  \nonumber\\
& = \langle \nabla f(X^t), X^t-X^* \rangle + \langle \nabla f(X^t), X^t + X^* - 2 V_{U^t}^* (U^t)^T \rangle.
\label{Eq:T1-lowerbound0}
\end{align}
In the following, we respectively establish the lower bounds of these two terms in the right-hand side of the above equality.

\textbf{(a) Lower bound of $\langle \nabla f(X^t), X^t-X_r^* \rangle$.}
To bound $\langle \nabla f(X^t), X^t-X_r^* \rangle$, we utilize the following two inequalities mainly by the $L$-smoothness and $(\mu,r)$-restricted strong convexity of $f$, that is,
\begin{align*}
\text{(i)} & \ f(X^*) \geq f(X^t) + \langle \nabla f(X^t), X^*-X^t \rangle + \frac{\mu}{2}\|X^* - X^t\|_F^2, \\
\text{(ii)} & \ f(X^t) \geq f(X^*) + \frac{1}{2L}\cdot\|{\cal P}_{U^t}\nabla f(X^t)\|_F^2,
\end{align*}
where (i) holds for the $(\mu,r)$-restricted strong convexity of $f$, (ii) holds for the following inequality induced by the $L$-smoothness of $f$, i.e.,
\begin{align*}
f(X^t)
&\geq f(\bar{X}^t) + \langle \nabla f(X^t), X^t-\bar{X}^t \rangle - \frac{L}{2}\|X^t-\bar{X}^t\|_F^2\\
&(\text{where}\ \bar{X}^t: = X^t - \frac{1}{L}{\cal P}_{U^t}\nabla f(X^t){\cal P}_{U^t}) \\
&= f(\bar{X}^t) +\frac{1}{2L} \|{\cal P}_{U^t}\nabla f(X^t)\|_F^2\\
&(\because \langle \nabla f(X^t), {\cal P}_{U^t} \nabla f(X^t){\cal P}_{U^t}\rangle = \|{\cal P}_{U^t} \nabla f(X^t)\|_F^2),
\end{align*}
and $f(\bar{X}^t) \geq f(X^*)$ since $X^*$ is an optimum and $\bar{X}^t$ is a feasible point by Lemma \ref{Lemm:feasibility}(b).
Summing the inequalities (i)-(ii) yields
\begin{align}
&\langle \nabla f(X^t), X^t-X^* \rangle
\geq \frac{\mu}{2} \|X^t-X^*\|_F^2
+ \frac{1}{2L} \|{\cal P}_{U^t}\nabla f(X^t)\|_F^2 \nonumber\\
&\geq (\sqrt{2}-1)\mu \sigma_r(X^*) {\cal E}(U^t,U_r^*)
+\frac{1}{2L} \|{\cal P}_{U^t} \nabla f(X^t)\|_F^2,
\label{Eq:T11}
\end{align}
where the second inequality is due to Lemma \ref{Lemm:dist-twomat}, i.e., $\|X^t-X^*\|_F^2 \geq 2(\sqrt{2}-1)\sigma_r(X^*) {\cal E}(U^t,U_r^*)$.

\textbf{(b) Lower bound of $\langle \nabla f(X^t), X^t + X^* - 2 V_{U^t}^* (U^t)^T \rangle$.}
Note that
\begin{align}
& \langle \nabla f(X^t), X^t + X^* - 2 V_{U^t}^* (U^t)^T \rangle \nonumber\\
& = \langle {\cal P}_{U^t} \nabla f(X^t) + ({\bf I}-{\cal P}_{U^t})\nabla f(X^t),  X^t + X^* - 2 V_{U^t}^* (U^t)^T \rangle \nonumber\\
& = \langle {\cal P}_{U^t} \nabla f(X^t), X^t + X^* - 2 V_{U^t}^* (U^t)^T \rangle \nonumber\\
& =\langle {\cal P}_{U^t} \nabla f(X^t), (U^t-V_{U^t}^*)(U^t-V_{U^t}^*)^T \rangle \nonumber\\
& \geq -\frac{1}{4L} \|{\cal P}_{U^t}\nabla f(X^t)\|_F^2
- L{\cal E}^2({U^t},U_r^*), \label{Eq:T12}
\end{align}
where the second equality is due to $\langle ({\bf I}-{\cal P}_{U^t})\nabla f(X^t), X^t \rangle =0$, $\langle ({\bf I}-{\cal P}_{U^t})\nabla f(X^t), V_{U^t}^* (U^t)^T \rangle =0$ and $\langle ({\bf I}-{\cal P}_{U^t})\nabla f(X^t), X^* \rangle =0$ by $({\bf I}-{\cal P}_{U^t})U^t = 0$ and Lemma \ref{Lemm:feasibility}(c),
and the third equality holds for $X^* = U_r^* {U_r^*}^T = (U_r^*R_{U^t}^*)(U_r^*R_{U^t}^*)^T = V_{U^t}^* {V_{U^t}^*}^T$, and the inequality holds for the basic inequality: $\langle Y, Z \rangle \geq -\frac{\delta}{2} \|Y\|_F^2 - \frac{1}{2\delta} \|Z\|_F^2$ for any $Y, Z \in \mathbb{R}^{p\times p}$ and $\delta= \frac{1}{2L}$.

Thus, substituting \eqref{Eq:T12} and \eqref{Eq:T11} into \eqref{Eq:T1-lowerbound0}
yields the claim of this lemma.
\end{proof}

\subsection*{B.2. Bound ${\mathbb E}_{{\cal I}_t}[\|v_k^tU^t\|_F^2]$}

\begin{lemma}
\label{Lemm:bound-variance}
Let Assumption \ref{Assump:objfun} hold. Assume that ${\cal E}(U^t,U_r^*)<\gamma_0 \sigma_r(X^*)$ and ${\cal E}(\tilde{U}^k,U_r^*)<\gamma_0 \sigma_r(X^*)$, then
\begin{align}
{\mathbb E}_{{\cal I}_t}[\|v_k^t U^t\|_F^2]
\leq {\frac{2}{b}}(2+\sqrt{\gamma_0})^2 L^2 \|X^*\|_2 B ({\cal E}(U^t,U_r^*)
+{\cal E}(\tilde{U}^k,U_r^*)) + \|{\cal P}_{U^t} \nabla f(X^t)\|_F^2 B, \label{Eq:bound-variance}
\end{align}
where $\gamma_0 = (\sqrt{2}-1)\kappa^{-1}$, and $B$ is specified in \eqref{Eq:B}.
\end{lemma}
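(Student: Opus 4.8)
\textbf{Proof proposal for Lemma \ref{Lemm:bound-variance}.}
The plan is to split off the deterministic part of the semi-stochastic gradient \emph{after} it has acted on $U^t$, then control the surviving zero-mean fluctuation by a standard mini-batch variance estimate, and finally convert the gradient-difference norms into the submanifold metric via Lemma \ref{Lemm:X-Xr*}. First I would write $v_k^tU^t = (v_k^t - \nabla f(X^t))U^t + \nabla f(X^t)U^t$ and note that, by the uniform sampling of ${\cal I}_t$, ${\mathbb E}_{{\cal I}_t}[v_k^t] = \nabla f(X^t)$, so the cross term vanishes and
\[
{\mathbb E}_{{\cal I}_t}[\|v_k^tU^t\|_F^2] = {\mathbb E}_{{\cal I}_t}[\|(v_k^t - \nabla f(X^t))U^t\|_F^2] + \|\nabla f(X^t)U^t\|_F^2.
\]

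For the deterministic term I would use $U^t = Q_{U^t}(Q_{U^t}^TU^t)$ (valid since ${\cal P}_{U^t}U^t = U^t$), the submultiplicativity $\|AB\|_F \le \|A\|_F\|B\|_2$, the identity $\|\nabla f(X^t)Q_{U^t}\|_F = \|{\cal P}_{U^t}\nabla f(X^t)\|_F$ (transpose, then left-multiply by $Q_{U^t}$, both norm-preserving, using symmetry of $\nabla f(X^t)$), and $\|Q_{U^t}^TU^t\|_2 \le \|U^t\|_2$, obtaining $\|\nabla f(X^t)U^t\|_F^2 \le \|{\cal P}_{U^t}\nabla f(X^t)\|_F^2\,\|X^t\|_2$. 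Since ${\cal E}(U^t,U_r^*) < \gamma_0\sigma_r(X^*)$ puts $U^t$ in ${\cal N}_{\gamma_0}$, we have $\|X^t\|_2 \le \|X^t\|_F \le B$ by \eqref{Eq:B}, which produces the last term of the claimed bound; the same submultiplicativity argument gives ${\mathbb E}_{{\cal I}_t}[\|(v_k^t - \nabla f(X^t))U^t\|_F^2] \le B\,{\mathbb E}_{{\cal I}_t}[\|v_k^t - \nabla f(X^t)\|_F^2]$.

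Next I would estimate the fluctuation. Because $v_k^t - \nabla f(X^t) = \frac{1}{b}\sum_{i_t\in{\cal I}_t}\big[(\nabla f_{i_t}(X^t) - \nabla f_{i_t}(\tilde{X}^k)) - (\nabla f(X^t) - \nabla f(\tilde{X}^k))\big]$ is an average of $b$ i.i.d.\ zero-mean matrices, the variance decomposes and, using ${\mathbb E}\|Y - {\mathbb E}Y\|_F^2 \le {\mathbb E}\|Y\|_F^2$ followed by Assumption \ref{Assump:objfun}(a),
\[
{\mathbb E}_{{\cal I}_t}[\|v_k^t - \nabla f(X^t)\|_F^2] \le \frac{1}{bn}\sum_{i=1}^n\|\nabla f_i(X^t) - \nabla f_i(\tilde{X}^k)\|_F^2 \le \frac{L^2}{b}\|X^t - \tilde{X}^k\|_F^2.
\]
Finally I would bound $\|X^t - \tilde{X}^k\|_F \le \|X^t - X^*\|_F + \|\tilde{X}^k - X^*\|_F$, apply Lemma \ref{Lemm:X-Xr*} to $U^t$ and to $\tilde{U}^k$ (both admissible by the standing hypotheses ${\cal E}(U^t,U_r^*),{\cal E}(\tilde{U}^k,U_r^*)<\gamma_0\sigma_r(X^*)$), then square and use $(a+b)^2 \le 2(a^2+b^2)$ together with $\|U_r^*\|_2^2 = \|X^*\|_2$, obtaining $\|X^t - \tilde{X}^k\|_F^2 \le 2(2+\sqrt{\gamma_0})^2\|X^*\|_2({\cal E}(U^t,U_r^*) + {\cal E}(\tilde{U}^k,U_r^*))$. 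Substituting back yields precisely \eqref{Eq:bound-variance}.

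The computations are elementary; the one genuinely delicate point is the order of operations. One must peel off the deterministic part \emph{after} the right-multiplication by $U^t$, so that the surviving deterministic term is $\|\nabla f(X^t)U^t\|_F^2$, which is controllable by $\|{\cal P}_{U^t}\nabla f(X^t)\|_F^2$ and will later be cancelled against the $\frac{1}{4L}\|{\cal P}_{U^t}\nabla f(X^t)\|_F^2$ gained in Lemma \ref{Lemm:bound-innerproduct}; splitting earlier would leave the larger $\|\nabla f(X^t)\|_F^2$ and break the recursion in Lemma \ref{Lemm:2nd-order-descent}. A secondary technical point is verifying that the mini-batch variance really scales like $1/b$, which is immediate for sampling with replacement and only improves for sampling without replacement.
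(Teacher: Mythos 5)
Your proposal is correct and follows essentially the same route as the paper's proof: a bias--variance decomposition of ${\mathbb E}_{{\cal I}_t}[\|v_k^tU^t\|_F^2]$, the $1/b$ mini-batch variance scaling, the Lipschitz bound $\|\nabla f_i(X^t)-\nabla f_i(\tilde{X}^k)\|_F \le L\|X^t-\tilde{X}^k\|_F$, and Lemma \ref{Lemm:X-Xr*} to pass to the submanifold metric, with only the cosmetic difference that you factor out $\|U^t\|_2^2 \le B$ before bounding the fluctuation while the paper keeps the right-multiplication by $U^t$ inside the variance until the end. Your remark on sampling without replacement (variance only improves) is a valid clarification of a point the paper treats as i.i.d. without comment.
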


\begin{proof}
Let $\xi_{i_t}:= \left(\nabla f_{i_t}(X^t) - \nabla f_{i_t}(\tilde{X}^k) + \nabla f(\tilde{X}^k)\right)U^t$,
then $\{\xi_{i_t}\}_{i_t\in {\cal I}_t}$ are independent and identically distributed (i.i.d) with $\mathbb{E}_{i_t}[\xi_{i_t}] = \nabla f(X^t)U^t$.
Note that the definition of $v_k^t$, we have
\begin{align}
&\mathbb{E}_{{\cal I}_t}[\|v_k^t U^t\|_F^2] = \frac{1}{b^2}\mathbb{E}_{{\cal I}_t} [\|\sum_{i_t\in {\cal I}_t} \xi_{i_t}\|_F^2] \nonumber\\
&=\frac{1}{b^2} \left( \mathbb{E}_{{\cal I}_t} [\|\sum_{i_t\in {\cal I}_t} \xi_{i_t} -b\nabla f(X^t)U^t \|_F^2] + b^2 \|\nabla f(X^t)U^t \|_F^2\right) \nonumber\\
&=\frac{1}{b} \mathbb{E}_{i_t}[\|\xi_{i_t} - \nabla f(X^t)U^t\|_F^2]+\|\nabla f(X^t)U^t\|_F^2. \label{Eq:vkt-bound1}
\end{align}
where the second equality holds for $\mathbb{E}[\|\xi\|^2] = \mathbb{E}[\|\xi-\mathbb{E}\xi\|^2] + \|E\xi\|^2$ for some random variable $\xi$,
and the final equality holds for $\mathbb{E}[\|\sum_{i=1}^b (\xi_i-\mathbb{E} \xi_i)\|^2] = b \mathbb{E}[\|\xi_i-\mathbb{E} \xi_i\|^2]$ for $b$ i.i.d. random variables.
Note that
\begin{align}
&\mathbb{E}_{i_t}[\|\xi_{i_t} - \nabla f(X^t)U^t\|_F^2] \nonumber\\
&= \mathbb{E}_{i_t}[\|(\nabla f_{i_t}(X^t) - \nabla f_{i_t}(\tilde{X}^k))U^t -( \nabla f(X^t)-\nabla f(\tilde{X}^k))U^t\|_F^2]\nonumber\\
&\leq \mathbb{E}_{i_t}[\|(\nabla f_{i_t}(X^t) - \nabla f_{i_t}(\tilde{X}^k))U^t\|_F^2]\nonumber\\
&\leq L^2 \|X^t-\tilde{X}^k\|_F^2 \|X^t\|_2\nonumber\\
&\leq 2L^2 (\|X^t-X^*\|_F^2 +\|\tilde{X}^k-X^*\|_F^2) B, \label{Eq:vkt-bound2}
\end{align}
where the first inequality holds for $\mathbb{E}[\|\xi-\mathbb{E} \xi\|^2] \leq \mathbb{E}[\|\xi\|^2]$ for any random variable $\xi$,
the second inequality holds for Assumption \ref{Assump:objfun} and the norm inequality $\|AB^T\|_F \leq \|A\|_2\|B\|_F$ for any two matrices $A, B$ of the same sizes with $A$ being full-column rank,
and the third inequality holds for the basic inequality $(a-c)^2 \leq 2(a-d)^2 + 2(c-d)^2$ and the definition of $B$.
Plugging \eqref{Eq:vkt-bound2} into \eqref{Eq:vkt-bound1} and noting that $\|\nabla f(X^t)U^t\|_F^2\|\leq \|{\cal P}_{U^t}\nabla f(X^t)\|_F^2 \|X^t\|_2\leq \|{\cal P}_{U^t}\nabla f(X^t)\|_F^2 B$ yields
\begin{align*}
\mathbb{E}_{{\cal I}_t}[\|v_k^t U^t\|_F^2]
&\leq \frac{2}{b}L^2 (\|X^t - X^*\|_F^2 + \|\tilde{X}^k - X^*\|_F^2)B
+ \|{\cal P}_{U^t}\nabla f(X^t)\|_F^2 B \nonumber\\
&\leq \frac{2}{b}(2+\sqrt{\gamma_0})^2L^2 \|X^*\|_2 B({\cal E}(U^t,U_r^*)+{\cal E}(\tilde{U}^k,U_r^*))
+ \|{\cal P}_{U^t}\nabla f(X^t)\|_F^2 B,
\end{align*}
where the second inequality holds for Lemma \ref{Lemm:X-Xr*}. This finishes the proof of this lemma.
\end{proof}
As shown by the proof of this lemma, the introduced semi-stochastic gradient is very crucial to the establishment of the key inequality \eqref{Eq:vkt-bound2} for bounding the term ${\mathbb E}_{{\cal I}_t}[\|v_k^tU^t\|_F^2]$.

\section*{Appendix C. Proof of Proposition \ref{Propos:initial-projgrad}}
\label{app:proof-propos-initial-projgrad}

The proof of Proposition \ref{Propos:initial-projgrad} is similar to that of \cite[Proposition 1]{Zeng-SGD2019}. We provide its proof here mainly for the completeness.

\begin{proof}
From \cite[Theorem 3.6]{Bubeck2014}, we have that for consecutive updates $\tilde{X}^{t-1}$, $\tilde{X}^t$, and the optimum $X^*$, projected gradient descent satisfies:
\begin{align*}
\|\tilde{X}^t - X^*\|_F^2 \leq (1-\kappa^{-1})\cdot \|\tilde{X}^{t-1}-X^*\|_F^2,
\end{align*}
which implies for any $t \in \mathbb{N}$,
\[
\|\tilde{X}^t - X^*\|_F^2 \leq (1-\kappa^{-1})^t\cdot \|\tilde{X}^{0}-X^*\|_F^2 = (1-\kappa^{-1})^t\cdot \|X^*\|_F^2.
\]
Thus, by the hypothesis of $T$,
\[
\|X^0 - X^*\|_F^2 = \|\tilde{X}^T - X^*\|_F^2 \leq \frac{16(\sqrt{2}-1)^2 \sigma_{r}^2(X^*)}{9\kappa}.
\]
By Lemma \ref{Lemm:dist-twomat}, there holds
\begin{align*}
{\cal E}(\tilde{U}^0,U^*)
\leq \frac{\|X^0-X^*\|_F^2}{2(\sqrt{2}-1)\sigma_r(X^*)}
\leq \frac{8(\sqrt{2}-1) \sigma_r(X^*)}{9\kappa}.
\end{align*}
Therefore, we end the proof.
\end{proof}

\section*{Acknowledgment}
The work of Jinshan Zeng is supported in part by the National Natural Science Foundation (NNSF) of China (No. 61977038), and Thousand Talents Plan of Jiangxi Province (No. jxsq2019201124). The work of Ke Ma is supported in part by the NNSF of China (No. 62006217), and China Post-doctoral Science Foundation (2020M680651), as well as supported by the Fundamental Research Funds for Central Universities. The research of Yuan Yao is supported in part by HKRGC 16303817, ITF UIM/390, as well as awards from Tencent AI Lab and Si Family Foundation. Part of Jinshan Zeng's work was done when he visited at Liu Bie Ju Centre for Mathematical Sciences, City University of Hong Kong.

\vskip 0.2in
\bibliography{sample}

\end{document}